\documentclass[11pt]{amsart}
\usepackage{latexsym, mathrsfs, color, tikz, multirow, bbm, mathtools}
\usepackage{graphics}
\usepackage{subcaption}
\input{xy}
\xyoption{all}

\usepackage[colorlinks=true, pdfstartview=FitV,
 linkcolor=black,citecolor=black,urlcolor=black]{hyperref}
 
\usepackage{tikz}

\usepackage{algorithm}
\usepackage{algpseudocode}

\usepackage{tikz-cd}
\usepackage{comment}
\usepackage[foot]{amsaddr}
\usepackage{orcidlink}

\DeclareFontFamily{U}{mathx}{\hyphenchar\font45}
\DeclareFontShape{U}{mathx}{m}{n}{
      <5> <6> <7> <8> <9> <10>
      <10.95> <12> <14.4> <17.28> <20.74> <24.88>
      mathx10
      }{}
\DeclareSymbolFont{mathx}{U}{mathx}{m}{n}
\DeclareFontSubstitution{U}{mathx}{m}{n}
\DeclareMathAccent{\widecheck}{0}{mathx}{"71}

\setlength{\textwidth}{460pt} \setlength{\hoffset}{-45pt}

\numberwithin{equation}{section}
\theoremstyle{plain}
\newtheorem{Thm}[equation]{Theorem}
\newtheorem{Prop}[equation]{Proposition}
\newtheorem{Cor}[equation]{Corollary}
\newtheorem{Lem}[equation]{Lemma}
\newtheorem{Conj}[equation]{Conjecture}

\theoremstyle{definition}
\newtheorem{Def}[equation]{Definition}

\newtheorem{Rmk}[equation]{Remark}

\newenvironment{red}{\relax\color{red}}{\relax}
\newenvironment{blue}{\relax\color{blue}}{\hspace*{.5ex}\relax}
\newenvironment{green}{\relax\color{green}}{\relax}

\newcommand{\ber}{\begin{red}}
\newcommand{\er}{\end{red}}
\newcommand{\beb}{\begin{blue}}
\newcommand{\eb}{\end{blue}}
\newcommand{\beg}{\begin{green}}
\newcommand{\eg}{\end{green}}

\newcommand{\bw}{{\boldsymbol{w}}}

\newcommand{\ured}{\text{uRed}}
\newcommand{\rred}{\text{Red}}
\DeclareMathOperator{\sgn}{sgn}
\begin{document}
	
\title[Unrestricted Red Size \& Sign-Coherence]{Unrestricted Red Size and Sign-Coherence}

\author[T. J. Ervin]{Tucker J. Ervin \orcidlink{0000-0001-5414-2379}}
\address{Department of Mathematics, University of Alabama,
	Tuscaloosa, AL 35487, U.S.A.}
\email{tjervin@crimson.ua.edu}

\begin{abstract}
    The unrestricted red size of a quiver is the maximal number of red vertices in its framed quiver after any given mutation sequence.
    In a 2023 paper by E. Bucher and J. Machacek, it was shown that connected, mutation-finite quivers either have an unrestricted red size of $n-1$ or $n$, where $n$ is the number of vertices in the quiver.
    We prove here that the same holds for the connected, mutation-infinite case using forks.
    As such, the unrestricted red size for any quiver equals $n-c$, where $c$ is the number of connected components of the quiver that do not admit a reddening sequence.
    Additionally, we prove a result on the $c$-vectors of forks that allows us to show that the $c$-vectors of both abundant acyclic quivers on any number of vertices and mutation-cyclic quivers on three vertices are sign-coherent with only elementary methods. 
\end{abstract}

\maketitle

\section{Introduction} \label{sec-introduction}

Maximal green sequences were formally dubbed such in a 2010 talk by B. Keller, which was later compiled into a series of lecture notes \cite{keller_cluster_2011}.
Both maximal green sequences and reddening sequences are special sequences of mutations that end in quivers with all red vertices, and they have powerful implications for the cluster algebras and Donald-Thompson invariants associated to quivers that admit them \cite{demonet_survey_2020}. 
Not every quiver admits a maximal green sequence or a reddening sequence, and in 2020 Ahmad and Li created a measure, the red size $\rred$, which counts the maximal number of red vertices in a quiver after mutating only at a green vertices \cite{ahmad_orbit-maximal_2020}.
In 2023, Bucher and Machacek generalized this to reddening sequences with the unrestricted red size $\ured$, which counts the maximal number of red vertices after any given mutation sequence \cite{bucher_red_2023}. 
Naturally, for a quiver $Q$ on $n$ vertices that admits a reddening sequence, we have that $\ured(Q) = n$.
The question then becomes: what is the unrestricted red size of $Q$ when it does not admit a reddening sequence?

Bucher and Machacek conjectured the following for the case of connected quivers. 

\begin{Conj}{\cite{bucher_red_2023}} \label{conj-b-m-connected}
    For any connected quiver $Q$ on $n$ vertices which does not admit a reddening sequence
    \begin{enumerate}
        \item We have $\rred(Q) = n-1$.

        \item We have $\rred(Q) = n-1$ and every vertex of $Q$ is the last remaining green vertex after the application of some general maximal green sequence. 

        \item We have $\ured(Q) = n-1$.

        \item We have $\ured(Q) = n-1$ and every vertex of $Q$ is the last remaining green vertex after the application of some general reddening sequence. 
    \end{enumerate}
\end{Conj}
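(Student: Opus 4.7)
The plan is to dispatch the mutation-finite case by appealing to Bucher--Machacek, reducing the conjecture to connected mutation-infinite quivers $Q$ on $n$ vertices which do not admit a reddening sequence. Throughout I would focus on proving the stronger statements (2) and (4), since these imply (1) and (3) respectively. Concretely, for each vertex $v$ of $Q$ I must exhibit a maximal green sequence and a general reddening sequence that each leave $v$ as the final green vertex while turning the other $n-1$ vertices red.

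The central tool is the theory of forks. The first step is to locate a fork $F$ inside the mutation class of $Q$; this is exactly the type of structural rigidity that forks provide for mutation-infinite quivers. Working inside the mutation class of $F$, I would use the property that mutating a fork at a non-point vertex again yields a fork, together with the $c$-vector sign-coherence result for forks announced in the abstract, to track which vertices are green at each stage along any proposed mutation sequence.

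The construction is then vertex-by-vertex. Given a target $v$, I would pass to a fork in the mutation class in which $v$ plays a specified role, and then mutate at the other $n-1$ vertices in an order dictated by the acyclic part of the fork, turning them red one by one. Sign-coherence guarantees that a vertex which has become red remains red, and the assumption that $Q$ admits no reddening sequence forces $v$ itself to stay green throughout the construction, since otherwise the sequence built would itself be a reddening sequence.

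The principal difficulty is the quantifier over vertices --- arranging that \emph{every} $v$ can be realized as the last remaining green vertex. Producing a single maximal green sequence attaining $n-1$ red vertices is relatively mild; showing that the targeting can be done for each vertex likely requires either a symmetry argument on the mutation class of forks or a careful case analysis based on the role $v$ plays in the chosen fork $F$. A secondary difficulty is reconciling the bookkeeping for restricted mutation (green vertices only, part (2)) with the bookkeeping for general reddening (part (4)), since the admissible moves differ and sign-coherence must be invoked slightly differently in each case.
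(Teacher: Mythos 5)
Your high-level skeleton---cite Bucher--Machacek for the mutation-finite case and use forks for the mutation-infinite case---matches the paper's strategy, but the proposal overreaches and rests on two claims that are false. The paper proves only part (3) of this conjecture (via Corollary \ref{cor-tsc-fork-ured-size}) and explicitly states that the stronger parts (1), (2), and (4) \emph{remain unresolved}; your plan to establish (2) and (4) and deduce the rest would, if it worked, prove strictly more than the paper does, so the burden on your mechanisms is high, and they do not hold up. First, ``sign-coherence guarantees that a vertex which has become red remains red'' is simply not true: sign-coherence only says each vertex is red or green at every stage, and colors flip back and forth under mutation. Indeed, the entire technical content of the paper's Sections \ref{sec-acyclic} and \ref{sec-sc} (Lemmata \ref{lem-color-abundant acyclic}, \ref{lem-color-source-acyclic-mutation-order}, \ref{lem-green-por-2-green}) is a careful bookkeeping of exactly when colors change along source mutation sequences in the acyclic part of a fork---in Lemma \ref{lem-green-por-2-green} a mutation explicitly ``swaps $r$ from red to green.'' Without that bookkeeping your ``turn them red one by one'' step has no justification.

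Second, ``the assumption that $Q$ admits no reddening sequence forces $v$ itself to stay green'' does not give you targeting. The non-existence of a reddening sequence only tells you that \emph{at least one} vertex is green at the end of any sequence; it says nothing about \emph{which} vertex, and nothing prevents $v$ from turning red mid-sequence while some other vertex ends up green. Realizing every vertex as the last green one is precisely the open part of the conjecture, and you correctly identify it as ``the principal difficulty'' but offer no mechanism beyond hoping for ``a symmetry argument.'' There is also a problem specific to part (2): a maximal green sequence may only mutate at green vertices, but the reduction of $\widehat{Q}$ to a fork (Theorem \ref{thm-connected-fork}) gives an arbitrary mutation sequence with no control over vertex colors, so your construction cannot even begin for the restricted red size $\rred$. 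A correct writeup along the paper's lines would claim only part (3): mutate $\widehat{Q}$ to a strictly sign-coherent fork, use the acyclic ordering of $F\setminus\{r\}$ and source mutations to reach a fork with a green point of return and at most one other green vertex (Corollary \ref{cor-tsc-fork-mutate-green-por}), and then eliminate one more green vertex by the explicit three-vertex computation of Lemma \ref{lem-green-por-2-green}.
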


Answering any part of Conjecture \ref{conj-b-m-connected} affirmatively would naturally imply their other conjecture ---a generalization of the mutation-invariance of admitting a reddening sequence.

\begin{Conj}{\cite{bucher_red_2023}} \label{conj-b-m-invariant}
    The unrestricted red size is a mutation-invariant.
\end{Conj}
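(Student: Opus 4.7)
The plan is to derive Conjecture \ref{conj-b-m-invariant} from the formula announced in the abstract: for any quiver $Q$,
\begin{equation*}
\ured(Q) = n - c,
\end{equation*}
where $n$ is the number of vertices of $Q$ and $c$ is the number of connected components of $Q$ that do not admit a reddening sequence. Since admitting a reddening sequence is a well-established mutation-invariant (Muller), and the number of connected components of a quiver is preserved under mutation, both $n$ and $c$ on the right-hand side are mutation-invariants. The formula therefore forces $\ured$ to be mutation-invariant as well, which is precisely Conjecture \ref{conj-b-m-invariant}.

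To establish the formula, I would first reduce to the connected case. If $Q = Q_1 \sqcup Q_2$, then the framed quiver decomposes as $\widehat{Q}_1 \sqcup \widehat{Q}_2$, any mutation sequence on $\widehat{Q}$ restricts to independent sequences on the two summands, and a mutable vertex of $\widehat{Q}$ is red if and only if it is red in the framed component containing it. Consequently
\begin{equation*}
\ured(Q_1 \sqcup Q_2) = \ured(Q_1) + \ured(Q_2),
\end{equation*}
and by induction $\ured$ on an arbitrary quiver is the sum of its values on the connected components.

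Next, I would establish that for every connected quiver $Q$ on $n$ vertices one has $\ured(Q) \in \{n-1, n\}$, with $\ured(Q) = n$ precisely when $Q$ admits a reddening sequence. For mutation-finite $Q$, this is exactly the Bucher–Machacek dichotomy cited in the introduction. For mutation-infinite $Q$, this is the main technical content of the paper, proved via forks and the sign-coherence of their $\mathbf{c}$-vectors: one exhibits a mutation sequence that turns $n-1$ of the mutable vertices red, while the upper bound $\ured(Q) \leq n - 1$ is immediate from the absence of a reddening sequence. Summing this dichotomy over the connected components of $Q$ yields $\ured(Q) = n - c$, and the conjecture follows.

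The principal obstacle is unquestionably the mutation-infinite connected case. In the mutation-finite setting, Bucher and Machacek could exploit the structure of finite mutation classes directly; in the mutation-infinite setting one must argue structurally, and forks (with their controllable $\mathbf{c}$-vector behavior) are exactly the combinatorial tool that makes such an argument feasible. Everything else --- the disjoint-union decomposition and the passage from the dichotomy to mutation-invariance --- is formal once this core result is in hand.
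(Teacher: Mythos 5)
Your proposal is correct and follows essentially the same route as the paper: the conjecture is deduced from the formula $\ured(Q) = n - c$ of Corollary \ref{cor-tsc-fork-ured-size}, which is itself obtained by combining the Bucher--Machacek dichotomy for connected mutation-finite quivers with the fork-based argument (Theorem \ref{thm-tsc-fork-almost-reddening} together with Theorem \ref{thm-connected-fork}) for the connected mutation-infinite case, summing over connected components, and invoking the mutation-invariance of admitting a reddening sequence.
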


Conjecture \ref{conj-b-m-connected} was proven for the connected, mutation-finite case.
Using a special type of quiver called a fork, we extend their results to the connected, mutation-infinite case.
This proves part (3) of Conjecture \ref{conj-b-m-connected}; however, the stronger variations of Conjecture \ref{conj-b-m-connected} remain unresolved.

\begin{Cor} \label{cor-intro-tsc-fork-ured-size}
    Every connected, mutation-infinite quiver $Q$ on $n$ vertices has $uRed(Q) \geq n-1$.
    Furthermore, if $Q$ is any quiver on $n$ vertices, then $\ured(Q) = n - c$, where $c$ is the number of connected components of $Q$ that do not admit reddening sequences.
\end{Cor}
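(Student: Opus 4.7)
The plan is to split the corollary into two assertions: the lower bound $\ured(Q) \geq n-1$ for connected, mutation-infinite $Q$, and the additive formula $\ured(Q) = n - c$ for general $Q$. The matching upper bound $\ured(Q) \leq n-1$ in the connected case without a reddening sequence is automatic, since a mutation sequence yielding all $n$ vertices red would itself constitute a reddening sequence. Bucher and Machacek already dispose of the lower bound in the mutation-finite case, so the only new work occurs for connected, mutation-infinite quivers.

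For that case, I would invoke the fact that every connected, mutation-infinite quiver $Q$ is mutation-equivalent, via some sequence $\sigma$, to a fork $F$. Since mutation of framed quivers ignores the red/green labeling of principal vertices, $\sigma$ lifts verbatim to the framed quiver $\hat Q$. Thus it suffices to produce a mutation sequence $\tau$ on $\hat F$ after which $n-1$ of the principal vertices are red; the concatenation $\sigma\tau$ applied to $\hat Q$ then witnesses $\ured(Q) \geq n-1$. To construct $\tau$, I would use the sign-coherence of the $c$-vectors of forks established earlier in the paper. Because the mutation structure of a fork is tightly controlled by its point of return, one can follow an explicit sequence that drives the $c$-vectors of all but one vertex into the non-positive orthant at a common stage, producing $n-1$ simultaneously red vertices.

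For the second assertion, the key observation is that $\ured$ is additive over connected components: if $Q = Q_1 \sqcup \cdots \sqcup Q_k$, then mutations at the vertices of one component do not affect the framed subquivers of the others, so $\ured(Q) = \sum_i \ured(Q_i)$. Applying the first assertion together with the Bucher--Machacek result component by component, each $Q_i$ contributes $|Q_i|$ if it admits a reddening sequence and $|Q_i| - 1$ otherwise; summing yields $\ured(Q) = n - c$, where $c$ counts the components of the latter type.

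The main obstacle is the construction of $\tau$ on the fork $F$. One must convert the abstract sign-coherence of a fork's $c$-vectors into an honest mutation sequence realizing $n-1$ red vertices at a \emph{single} stage, rather than $n-1$ non-positive $c$-vectors distributed across incompatible stages of different sequences. The combinatorics of mutating at or away from the point of return, and the need to avoid inadvertently achieving all $n$ red vertices (which would contradict the non-existence of a reddening sequence on a mutation-infinite fork), is where the technical subtleties concentrate.
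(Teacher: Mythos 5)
Your overall plan is the paper's: reduce to a fork via Theorem~\ref{thm-connected-fork}, use sign-coherence of the framed quiver's $c$-vectors, produce a sequence ending in $n-1$ red vertices on the fork (this is exactly Theorem~\ref{thm-tsc-fork-almost-reddening}, so you may simply cite it rather than re-derive it), and finish by additivity of $\ured$ over connected components together with the Bucher--Machacek result in the mutation-finite case. One point needs correcting, however: after applying $\sigma$ to $\widehat{Q}$, the resulting ice quiver has mutable part $F$ but its frozen arrows are \emph{not} those of the framed quiver $\widehat{F}$; they are whatever the $C$-matrix has become, constrained only by Theorem~\ref{thm-sign-coherence}. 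So it does not suffice to ``produce a mutation sequence $\tau$ on $\widehat{F}$'' --- the sequence must work for $\mu_\sigma(\widehat{Q})$, an arbitrary strictly sign-coherent ice quiver whose mutable part is a fork. This is precisely why the paper states Theorem~\ref{thm-tsc-fork-almost-reddening} for strictly sign-coherent forks rather than for framings of forks; with that substitution your argument goes through and coincides with the paper's. (Your observation that the upper bound $\ured(Q)\leq n-1$ is automatic when no reddening sequence exists, and the component-wise summation, are both correct and implicit in the paper.)
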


To tackle this question, we explore a generalization of sign-coherence.
Instead of depending on the signs of the $c$-vectors arising from principal coefficients, we allow a broader class of starting coefficients.
Those ice quivers which still have their $c$-vectors satisfy the sign-coherence property under any mutation sequence are of particular interest.
We take the name \textit{uniformly sign-coherent} for these ice quivers from a paper by Cao and Li \cite{cao_uniform_2019}.

As sign-coherence for $c$-vectors is already a well-known and well-studied property of framed quivers \cite{derksen_quivers_2010, gross_canonical_2018} and all connected and mutation-infinite quivers are mutation-equivalent to a fork \cite{warkentin_exchange_2014}, we demonstrate our result on the unrestricted red size by finding a sequence ending in $n-1$ red vertices for arbitrary \textit{strictly sign-coherent}---which disallows the zero vector as a $c$-vector---forks with $n$ mutable vertices. 
While exploring this topic, we found Proposition \ref{prop-control-signs-fork}: a way to describe the arrows between frozen and mutable vertices after mutating in a fork-preserving way.
This proposition has applications in a future work of the author and his collaborators, whose extended abstract was recently showcased at the 2024 FPSAC conference \cite{ervin_geometry_2024}.
Additionally, this argument can be modified slightly, culminating in Corollaries \ref{cor-sign-coherence-abundant acyclic} and \ref{cor-sign-coherence-mutation-cyclic-3}.
The two corollaries prove that the $c$-vectors of both abundant acyclic quivers on any number of vertices and mutation-cyclic quivers on three vertices are sign-coherent with only elementary methods. 
Similarly, we expand slightly on Proposition \ref{prop-control-signs-fork} to show that mutation-cyclic quivers on three vertices do not admit reddening sequences with only elementary methods.

As for the structure of the paper, we list the necessary preliminary material in Section \ref{sec-prelim}. 
Section \ref{sec-acyclic} develops the information on acyclic orderings that we need for Section \ref{sec-sc}, where we prove our main result.
Finally, Section \ref{sec-sign-coherent-forks} gives some results on the general $c$-vectors of ice quivers.

\subsection{Acknowledgements}

I would like to thank Eric Bucher, John Machacek, and Scott Neville for their discussions about this topic. 
I would also like to thank my advisor, Kyungyong Lee, for his guidance, support, and help revising this paper.

\section{Preliminaries} \label{sec-prelim}

\subsection{Quivers \& Mutation}

We begin with some definitions concerning quivers and quiver mutation.

\begin{Def} \label{def-quivers}
    A quiver $Q$ is defined as a tuple $(Q_0, Q_1, s,t)$, where $Q_0$ is a set of vertices, $Q_1$ is a set of arrows, and maps $s,t: Q_1 \to Q_0$ taking any arrow $a \in Q_1$ to its starting vertex $s(a)$ and its terminal vertex $t(a)$, i.e., $s(a) \to t(a)$ is the arrow $a$ in $Q$.
    We further restrict our definition so that $s(a) \neq t(a)$ (no loops) for all $a \in Q_1$ and that there is no pair of arrows $a,b \in Q_1$ where $s(a) = t(b)$ and $t(a) = s(b)$ (no 2-cycles).
    This naturally forces all arrows between two vertices to point in the same direction.
    For example, Figure \ref{fig-distinct-quivers} demonstrates two distinct quivers.
    
    \begin{figure}[ht]
        \centering
        \[\begin{tikzcd}
        i & j 
        \arrow[from=1-1,to=1-2]
        \end{tikzcd}
        \text{ and }\begin{tikzcd}
        i & j 
        \arrow[from=1-2,to=1-1]
        \end{tikzcd}\]
        \caption{Two Distinct Quivers}
        \label{fig-distinct-quivers}
    \end{figure}
    
    Additionally, whenever we have multiple arrows between the same two vertices, we write the multiplicity, say $m$, above or below the arrow, i.e, as in Figure \ref{fig-multiplicity}.
    
    \begin{figure}[ht]
        \centering
        \begin{tikzcd}
        i & j 
        \arrow[from=1-1,to=1-2, "m"]
        \end{tikzcd}
        \caption{Notation for Arrows With Multiplicity $m$}
        \label{fig-multiplicity}
    \end{figure}
    \begin{itemize}
        \item A vertex that only has arrows leaving (entering) is referred to as a \textit{source} (\textit{sink}).

        \item A quiver is called \textit{acyclic} if there exists no (nonempty) directed path of arrows beginning and ending at the same vertex.

        \item A quiver is said to be \textit{abundant} if there are at least two arrows between every pair of vertices. 

        \item A quiver is \textit{connected} if there exists a sequence of edges (considered without their directions) connecting any two pairs of vertices.

        \item A quiver $Q = (Q_0, Q_1, s,t)$ is called a \textit{full subquiver} (often called an \textit{induced subquiver} or just \textit{subquiver} if the context is clear) of a quiver $P = (P_0,P_1,s',t')$ if $Q_0 \subset P_0$, if $Q_1 \subseteq P_1$, if the maps $s$ and $t$ are restrictions of $s'$ and $t'$ to the set $Q_1$, and if $s(a), t(a) \in Q_0$ implies that $a \in Q_1$ for any arrow $a \in P_1$.
        For ease of use, the notation $P \setminus V$ will refer to the full subquiver induced by the vertex set $P_0 \setminus V$ for some set $V$.
    \end{itemize}
    Fix a quiver, denoted by a capital letter, say $Q$.
    Then the number of arrows between vertex $i$ and $j$ in $Q$ is denoted by the lowercase letter indexed by the vertices, $q_{ij}$, which is positive if $i \to j$ and negative if $j \to i$.
\end{Def}

\begin{Def} \label{def-mutation}
    \textit{Quiver mutation} is defined for every quiver $Q$ and vertex $v$ in $Q_0$ by the following:
    \begin{itemize}
        \item For every path of the form $a \to v \to b$ in $Q$, add an arrow from $a$ to $b$;
        
        \item Flip the direction of every arrow touching $v$;
        
        \item Finally, if any 2-cycles were formed (paths of length two that begin and end at the same vertex), remove them.
    \end{itemize}
   This process will always mutate a quiver into another quiver, and we denote the mutation of $Q$ at vertex $v$ by $\mu_v(Q)$.
   If we have a sequence of vertices, $\bw = [i_1,i_2,i_3,\dots,i_n]$, then we use 
   $$\mu_\bw(Q) = \mu_{i_n}(\mu_{[i_1,i_2,\dots,i_{n-1}]}(Q))$$
   to denote the result of mutating $Q$ at each vertex $i_j$ in order.
   We call $\bw$ a \textit{mutation sequence}.
   If we are always mutating at sources, then we call $\bw$ a \textit{source mutation sequence}.
   Additionally, we assume that all of our mutation sequences are \textit{reduced}, i.e., we never see mutation at a vertex and then another immediate mutation at that same vertex.
\end{Def}

\begin{Def} \label{def-mutation-class}
    Two quivers are said to be \textit{mutation-equivalent} if there exists a sequence of mutations taking one to the other.
    The collection of all quivers mutation-equivalent to a quiver $Q$ is denoted $[Q]$, and it is called the \textit{labelled mutation class} of $Q$.
    If $[Q]$ is a finite (infinite) set, then $Q$ is said to be \textit{mutation-finite} (\textit{mutation-infinite}).
    Further, if $Q$ is mutation-equivalent to an acyclic quiver, it is said to be \textit{mutation-acyclic}.
    If $Q$ is not mutation-acyclic, it is \textit{mutation-cyclic}.
\end{Def}

Which vertices we allow ourselves to mutate at separate quivers from ice quivers. 
The former allow mutation at any vertex.
The latter have certain frozen vertices for which mutation is forbidden.

\begin{Def} \label{def-red-green-blue}
    A vertex of a quiver is said to be \textit{frozen} if we do not allow mutation at that vertex.
    Any non-frozen vertex is \textit{mutable}, and quivers with frozen vertices are called \textit{ice quivers} when a distinction needs to be made.
    A mutable vertex $i$ in a quiver $Q$ is said to be \textit{red} (\textit{green}) if $|q_{ji}| \neq 0$ implies that $q_{ji} > 0$ ($q_{ij} > 0$) for all frozen vertices $j$.
    If $q_{ji} = 0$ for all frozen vertices $j$, we say that $i$ is \textit{blue}.
\end{Def}

An important class of ice quivers are the framed quivers, which are the objects of study that Conjecture \ref{conj-b-m-connected} is concerned with.

\begin{Def} \label{def-framed-quiver}
    We may form a \textit{framed} quiver $\widehat{Q}$ from a quiver $Q$ in the following manner.
    Add a frozen vertex $i'$ to $Q$ for each vertex $i$, where a single arrow points from $i$ to $i'$.
    For example, if $Q$ was the quiver $i \to j$, then the framed  quiver $\widehat{Q}$ would be given in Figure \ref{fig-framed-quiver}.
    
    \begin{figure}[ht]
        \centering
        \begin{tikzcd}
            i' & j' \\
            i & j 
            \arrow[from=2-1,to=1-1]
            \arrow[from=2-1,to=2-2]
            \arrow[from=2-2,to=1-2]
        \end{tikzcd}
        \caption{Example of Framed Quiver}
        \label{fig-framed-quiver}
    \end{figure}
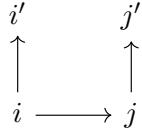
    
    Similarly, we may form the \textit{coframed quiver} $\widecheck{Q}$ by adding a frozen vertex $i'$ to $Q$ for each vertex $i$, where a single arrow points from $i'$ to $i$.
    For example, if $Q$ was $i \to j$, then the coframed $\widecheck{Q}$ would be given in Figure \ref{fig-coframed-quiver}.
    
    \begin{figure}[ht]
        \centering
        \begin{tikzcd}
            i' & j' \\
            i & j 
            \arrow[from=1-1,to=2-1]
            \arrow[from=2-1,to=2-2]
            \arrow[from=1-2,to=2-2]
        \end{tikzcd}
        \caption{Example of Coframed Quiver}
        \label{fig-coframed-quiver}
    \end{figure}
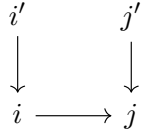
\end{Def}

The reason why we focus on framed and coframed quivers instead of truly arbitrary ice quivers comes from the sign-coherence theorem, which controls the colors of vertices in the labelled mutation class of $\widehat{Q}$ and $\widecheck{Q}$.

\begin{Thm}{ \cite{derksen_quivers_2010}} \label{thm-sign-coherence}
    Let $Q$ be any quiver.
    If $P$ is mutation-equivalent to $\widehat{Q}$, then every mutable vertex of $P$ is either red or green.
    Similarly, if $P$ is mutation-equivalent to $\widecheck{Q}$, then every mutable vertex of $P$ is either red or green.
\end{Thm}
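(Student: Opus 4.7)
The plan is to follow the categorical approach of Derksen-Weyman-Zelevinsky, lifting quiver mutation to an operation on decorated representations of a quiver with potential so that the arrow multiplicities between mutable and frozen vertices acquire a natural categorical meaning. First I would choose a generic (non-degenerate) potential $W$ on $Q$ and work inside the category of decorated representations $(M, V)$ of the Jacobian algebra $J(Q,W)$, where $M$ is a finite-dimensional representation and $V$ records a nonnegative ``negative'' multiplicity at each vertex. I would then define DWZ mutation $\mu_k$ on decorated representations at any mutable vertex $k$, check that it induces the combinatorial mutation of Definition \ref{def-mutation} on the underlying quiver (up to right-equivalence of potentials), and verify that $\mu_k$ is an involution up to isomorphism.

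Second, I would identify the multiset of arrows between the frozen vertex $j'$ and the mutable vertices in $\mu_\bw(\widehat{Q})$ with the signed dimension vector of $\mu_\bw$ applied to the simple decorated representation $\mathcal{S}_j$. Under this dictionary, the vertex $j$ of $\mu_\bw(\widehat{Q})$ is green precisely when $\mu_\bw(\mathcal{S}_j)$ is a pure decoration, and red precisely when it is a genuine representation carrying no decoration. Sign-coherence for vertex $j$ thereby reduces to showing that $\mu_\bw(\mathcal{S}_j)$ never mixes the two parts.

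The main obstacle is exactly this last reduction. A purely combinatorial induction on the length of $\bw$ fails: a single quiver mutation can superficially create arrows at $j'$ of both signs, and the cancellations that restore sign-coherence come from relations in $J(Q,W)$ rather than from the bare quiver. To control this one has to use the homological definition of $\mu_k$ on decorated representations, the triangle of maps at vertex $k$ that builds the new representation from $\ker / \operatorname{im}$ data, and the involutivity of $\mu_k$; together these imply that under any iterated mutation the simple $\mathcal{S}_j$ is sent either to a pure representation or to a pure decoration, which is the genuine input from DWZ that cannot be bypassed by elementary arrow counting. Finally, the coframed case follows by symmetry: $\widecheck{Q}$ is the framed quiver of $Q^{\mathrm{op}}$ (with roles of red and green swapped), and mutation commutes with arrow reversal, so the result for $\widecheck{Q}$ reduces to the result for $\widehat{Q^{\mathrm{op}}}$ already established.
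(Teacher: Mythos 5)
First, a point of comparison: the paper does not prove Theorem \ref{thm-sign-coherence} at all --- it is imported from Derksen--Weyman--Zelevinsky (and, in greater generality, Gross--Hacking--Keel--Kontsevich), so there is no internal argument to measure your proposal against. What you have written is a sketch of the DWZ proof itself, and at that level the strategy is the right one: non-degenerate potentials, decorated representations of the Jacobian algebra, the identification of $c$-vectors with signed dimension vectors of mutated simples, and the reduction of the coframed case to the framed case of the opposite quiver are all legitimate ingredients of the cited argument.

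As a proof, however, your proposal has a genuine gap, and you half-acknowledge it yourself: the entire content of the theorem is concentrated in the claim that $\mu_\bw(\mathcal{S}_j)$ is always either a genuine representation or a pure decoration and never a mixture, and your text defers exactly this step to ``genuine input from DWZ that cannot be bypassed.'' Naming the mechanism (the triangle of linear maps at the mutated vertex, involutivity of $\mu_k$ up to right-equivalence) is not the same as carrying out the induction that rules out the mixed case; that induction requires the $E$-invariant and the homological interpretation of mutation, and it is where all of the work lies. There is also an indexing slip worth fixing: in the conventions of Definition \ref{def-matrix-mutation} and Remark \ref{rmk-red-green-c-vector}, the color of the mutable vertex $j$ is governed by the arrows between $j$ and \emph{all} frozen vertices (the row $c_j^\bw$ of the $C$-matrix), whereas you attach your dimension vector to the single frozen vertex $j'$ and its arrows to all mutable vertices. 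Those are the row and column sign-coherence statements for the $C$-matrix; both are true, but they are not literally the same assertion, and passing from one to the other (for instance via the Nakanishi--Zelevinsky transposition identities) would need to be made explicit. The final reduction of the coframed case to $\widehat{Q^{\mathrm{op}}}$ is fine.
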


We can then define the mutation sequences that we wish to study.

\begin{Def} \label{def-red-sequence}
    A mutation sequence $\bw$ is said to be a \textit{reddening sequence} for an ice quiver $Q$ if $P = \mu_\bw(Q)$ has $n$ red vertices, where $n$ is the number of mutable vertices in $Q$.
    If we only mutated at green vertices along $\bw$, the sequence is instead called a \textit{maximal green sequence}.
    Bucher and Machacek generalized reddening sequences to \textit{general reddening sequences}, which requires that $P$ have a maximal number, $uRed(Q)$, of red vertices.
    We say that $uRed(Q)$ is the unrestricted red size of $Q$ \cite{bucher_red_2023}.

    If $Q$ has only mutable vertices, we will say that $Q$ has a reddening sequence, maximal green sequence, a general reddening sequence, etc. whenever the framed quiver, $\widehat{Q}$, does.
\end{Def}

\subsection{Matrices}

In Section \ref{sec-sign-coherent-forks}, it will be advantageous to use the exchange matrix of a quiver.

\begin{Def}{\cite{fomin_introduction_2021-1}} \label{def-extended-skew-symmetrizable}
    Let $Q$ be any ice quiver with $n$ mutable vertices and $m$ frozen vertices.
    Furthermore, assume that the mutable vertices are labelled $1$ through $n$ and that the frozen vertices are labelled $n+1$ through $n+m$.
    Then we may construct a matrix $[ B | C]$, known as the \textit{extended exchange matrix}, where $B$ is a $n$-by-$n$ matrix and $C$ is a $n$-by-$m$ matrix.
    The matrix $B$ is constructed by letting $b_{ij} = q_{ij}$ for all mutable vertices $i$ and $j$.
    The matrix $C$ is constructed by letting $c_{ij} = q_{ij}$ for all mutable vertices $i$ and frozen vertices $j$.
\end{Def}

Naturally, this process agrees with mutation, giving rise to the well-known matrix version of mutation.

\begin{Def}{\cite{fomin_introduction_2021-1}} \label{def-matrix-mutation}
    Let $[ B | C]$ be a matrix corresponding to an ice quiver $Q$.
    Then the mutation of $[B | C]$, denoted $[B^{[k]} | C^{[k]}]$ at a mutable vertex of $Q$, $k$, is given by 
    $$b_{ij}^{[k]} = \begin{cases}
        b_{ij} + \sgn(b_{ik}) b_{ik}b_{kj} = b_{ij} + |b_{ik}|b_{kj} & \text{if } b_{ik}b_{kj} > 0\\
        -b_{ij} & \text{if } k \in \{i,j\}\\
        b_{ij} & \text{otherwise}
    \end{cases}$$
    and
    $$c_{ij}^{[k]} = \begin{cases}
        c_{ij} + \sgn(b_{ik}) b_{ik}c_{kj} = c_{ij} + |b_{ik}|c_{kj}& \text{if } b_{ik}c_{kj} > 0\\
        -c_{ij} & \text{if } k = i\\
        c_{ij} & \text{otherwise}
    \end{cases}.$$
    If $\bw$ is a mutation sequence, then we can iterate this process to find $B^\bw$ and $C^\bw$, the matrices corresponding to $\mu_\bw(Q)$.
    The $i$th row of $C^\bw$ is denoted $c_i^\bw$ and is called a $c$-vector.
\end{Def}

\begin{Rmk} \label{rmk-red-green-c-vector}
    In terms of our earlier notation, we see that a mutable vertex, $v$, of $Q$ is green if and only if $c_{vj} > 0$ for some frozen vertex $j$ and $c_{vk} \geq 0$ for every frozen vertex $k \neq j$.
    We then say that $c_{v}$, the corresponding row vector of the $C$-matrix, has positive sign, equivalently $\sgn(c_v) > 0$ or $c_v \geq 0$ (this last inequality is taken component wise).
    Similarly, a vertex $v$ of $Q$ is red if and only if $c_{vj} < 0$ for some frozen vertex $j$ and $c_{vk} \leq 0$ for every other frozen vertex $k$.
    We then say that $c_{v}$ has negative sign, equivalently $\sgn(c_v) < 0$ or $c_v \leq 0$.
    A vertex $v$ of $Q$ is blue if and only if $c_{v} = 0$.
\end{Rmk}

Theorem \ref{thm-sign-coherence} was originally given in the language of $g$-vectors in the paper by Derksen et al \cite{derksen_quivers_2010}.
However, it easily translates to $c$-vectors and is proven in more generality by Gross et al \cite{gross_canonical_2018}, where sign-coherence refers to all the components of a $c$-vector having the same sign.
For our purposes, we will be thinking of sign-coherence in the terms of arbitrary ice quivers, arising from ice quivers which are not necessarily a framing.
Before we get to this generalization, we need to introduce forks.

\subsection{Forks}

Forks were first introduced in a 2014 dissertation by M. Warkentin \cite{warkentin_exchange_2014}, and they form the backbone of our two main results.

\begin{Def}{\cite[Definition 2.1]{warkentin_exchange_2014}} \label{def-forks}
A \textit{fork} is an abundant quiver $F$, where $F$ is not acyclic and where there exists a vertex $r$, called the point of return, such that
\begin{itemize}
    \item For all $i \in F^{-}(r)$ and $j \in F^{+}(r)$ we have $f_{ji} > f_{ir}$ and $f_{ji} > f_{rj}$, where $F^{-}(r)$ is the set of vertices with arrows pointing towards $r$ and $F^{+}(r)$ is the set of vertices with arrows coming from $r$.
    
    \item $F \setminus \{r\}$ is an acyclic quiver.
\end{itemize}
An example of a fork is given in Figure \ref{fig-fork-example}, 
where $r$ is the point of return.
\begin{figure}[ht]
    \centering
    \begin{tikzcd}
    & r \\
    i & & j
    \arrow[from=2-1, to=1-2, "3"]
    \arrow[from=1-2, to=2-3, "4"]
    \arrow[from=2-3, to=2-1, "5"]
\end{tikzcd}
    \caption{Example of a Fork With Point of Return $r$}
    \label{fig-fork-example}
\end{figure}
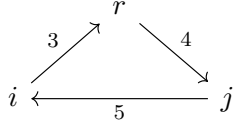
\end{Def}

All of the useful properties arising from forks have been proven using elementary methods by Warkentin.
Of these, Lemma \ref{lem-fork-mutation} and Theorem \ref{thm-connected-fork} are some of the most important.
Lemma \ref{lem-fork-mutation} gives forks their name, as in practice it means any mutation sequence reaching a fork reaches a "fork in the road."
Theorem \ref{thm-connected-fork} is what allows us to prove our main result on the unrestricted red size.

\begin{Lem}{\textup{\cite[Lemma 2.5]{warkentin_exchange_2014}}} \label{lem-fork-mutation}
    Let $F$ be either a fork with point of return $r$ or an abundant acyclic quiver.
    If $k$ is a vertex in $F$ that is not the point of return, not a source, or not a sink, then $\mu_k(F)$ is a fork with point of return $k$.
    Additionally, the total number of arrows contained in $\mu_k(F)$ is strictly greater than the total number of arrows in $F$. 
\end{Lem}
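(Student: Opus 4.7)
The plan is to verify the three conclusions — that $\mu_k(F)$ is a fork with point of return $k$, that $\mu_k(F) \setminus \{k\}$ is acyclic, and that the total arrow count strictly increases — by direct computation with the mutation rule, exploiting a carefully chosen linear ordering of the vertices.

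First, I would fix such an ordering. In the abundant acyclic case there is a topological order of $F$ itself. In the fork case, since $F \setminus \{r\}$ is acyclic, I would use a topological order of $F \setminus \{r\}$ arranged so that every vertex of $F^+(r)$ precedes every vertex of $F^-(r)$. This arrangement is forced because the fork inequality guarantees $f_{wu} > 0$, i.e.\ $w \to u$ in $F$, for every $w \in F^+(r)$ and $u \in F^-(r)$; in any topological order of $F \setminus \{r\}$ each such $w$ must therefore precede each such $u$. With this ordering in hand, the direction of every arrow not incident to $r$ is determined by the relative position of its endpoints.

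Since $k \neq r$ and $k$ is neither a source nor a sink, both $F^-(k)$ and $F^+(k)$ are nonempty. For each pair $(i,j) \in F^-(k) \times F^+(k)$ the mutation adds $f_{ik} f_{kj}$ arrows $i \to j$ and then cancels any 2-cycle against preexisting arrows. I would split into subcases by whether $r \in \{i,j\}$: when $r \notin \{i,j\}$ the chosen ordering places $i$ before $j$, so the existing arrow already points $i \to j$ and no cancellation happens; when $r \in \{i,j\}$, the fork inequality shows that $f_{ik} f_{kj}$ dominates any opposite arrow, so after cancellation a net arrow $i \to j$ survives. This bookkeeping lets me verify the fork condition at the new point of return $k$: the new in-neighbors of $k$ are the old $F^+(k)$, the new out-neighbors are the old $F^-(k)$, and for each relevant pair the updated multiplicity from $i$ to $j$ is at least $f_{ik} f_{kj} - |f_{ji}|$, which strictly exceeds each of $f_{ik}$ and $f_{kj}$ using only abundance $f_{ab} \geq 2$ (with the sharper fork bound handling the pairs that involve $r$).

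The principal obstacle is showing that $\mu_k(F) \setminus \{k\}$ remains acyclic. In the abundant acyclic case this is immediate, since every new arrow runs forward in the topological order and no arrow is reversed. The fork case is more delicate because $r$ is still present and arrows between $r$ and vertices of $F^{\pm}(k) \cap F^{\mp}(r)$ may be reversed by 2-cycle cancellation. I would dispose of this by exhibiting an explicit topological order on $\mu_k(F) \setminus \{k\}$: take the order from the first step (with the roles of the neighbors of $k$ now swapped by the mutation) and insert $r$ into the gap between the new $F^-(k)$ and the new $F^+(k)$, then use the fork inequalities to certify that every arrow — preserved, newly added, or flipped by cancellation — runs forward. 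Finally, the strict increase in the arrow count is immediate once the above is in place: each pair $(i,j) \in F^-(k) \times F^+(k)$ contributes $f_{ik} f_{kj} \geq 4$ new arrows while at most $|f_{ji}|$ of them cancel, and the fork/abundance inequalities keep the net contribution strictly positive.
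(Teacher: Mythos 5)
The paper does not reprove this lemma---it is imported verbatim from Warkentin---so I can only judge your argument on its own terms. Most of it is sound: the topological order of $F\setminus\{r\}$ in which every vertex of $F^+(r)$ precedes every vertex of $F^-(r)$, the multiplicity count $f_{ij}+f_{ik}f_{kj}$ for pairs $(i,j)\in F^-(k)\times F^+(k)$ (no cancellation when $r\notin\{i,j\}$ because the order forces $i\to j$; the fork inequalities absorbing the cancellation when $r\in\{i,j\}$), and the resulting verification of abundance, of the fork inequalities at the new point of return $k$, and of the strict growth of the arrow count all go through.

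The genuine gap is in the acyclicity of $\mu_k(F)\setminus\{k\}$, exactly the step you call delicate. You propose to insert $r$ ``into the gap between the new $F^-(k)$ and the new $F^+(k)$,'' i.e.\ between the two blocks of former neighbours of $k$; that is not where $r$ goes, and the certification you promise would fail there. The arrows joining $r$ to the vertices on its own side of $k$ are untouched by the mutation, and the fork inequality forces them all to point the same way: if $r\to k$, so $k\in F^+(r)$, then $f_{ki}>f_{ir}>0$ for every $i\in F^-(r)$ gives $F^-(r)\subseteq F^+(k)$, hence $F^-(k)\setminus\{r\}\subseteq F^+(r)$ and $r$ has an arrow \emph{to} every vertex of its block; combined with the cross arrows (whose post-mutation sign you already computed) this makes $r$ the unique source of $\mu_k(F)\setminus\{k\}$. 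Dually, if $k\to r$ then $r$ is the unique sink. Any middle placement leaves all of those unchanged arrows running backwards. This is precisely what the paper's Lemma \ref{lem-fork-mutate-order} records: $r$ is prepended or appended to the old order of $F\setminus\{r,k\}$, never interleaved. The same containment $F^-(r)\subseteq F^+(k)$ (resp.\ $F^+(r)\subseteq F^-(k)$) is also what rules out a $3$-cycle through $r$ surviving inside the block containing $r$, a point your sketch never addresses. The repair is local, but the topological order you exhibit is not one.
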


\begin{Thm}{\textup{\cite[Theorem 3.2]{warkentin_exchange_2014}}} \label{thm-connected-fork}
    A connected quiver is mutation-infinite if and only if it is mutation-equivalent to a fork.
\end{Thm}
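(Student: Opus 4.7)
The plan is to prove the biconditional by handling the two directions separately, with the ``if'' direction being a short iteration of Lemma~\ref{lem-fork-mutation} and the ``only if'' direction being the substantial part.

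For the ``if'' direction, suppose $Q$ is mutation-equivalent to a fork $F$ with point of return $r$ on $n \geq 3$ vertices. I would first verify that $F$ has a vertex $k$ that is neither $r$, a source, nor a sink. Since $F$ is non-acyclic and abundant, both $F^{-}(r)$ and $F^{+}(r)$ are nonempty; any $k \in F^{-}(r) \cup F^{+}(r)$ has both an arrow touching $r$ and at least one additional arrow in $F \setminus \{r\}$ forced by abundance, so $k$ has both incoming and outgoing arrows. Lemma~\ref{lem-fork-mutation} then gives that $\mu_k(F)$ is a fork with strictly more arrows than $F$. Iterating produces a sequence of forks in $[F]$ with strictly increasing total arrow counts, which are therefore pairwise distinct as labelled quivers, forcing $[Q]=[F]$ to be infinite.

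For the ``only if'' direction, assume $Q$ is connected and mutation-infinite. The goal is to exhibit a mutation sequence $\bw$ with $\mu_\bw(Q)$ a fork. My plan is first to reach an abundant quiver in $[Q]$ and then to apply Lemma~\ref{lem-fork-mutation} one additional time. To reach abundance, I would invoke a pigeonhole observation: on a fixed set of $n$ vertices only finitely many quivers have total arrow-count below any given bound, so the infinitude of $[Q]$ forces some mutation sequence to build unbounded multiplicities, and in particular produces $\mu_{\bu}(Q)$ containing at least one pair of vertices with very large multiplicity between them.

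The main obstacle is promoting such a \emph{local} concentration of arrows (between a single pair) to \emph{global} abundance (between every pair), because mutation is non-monotone in multiplicities and can decrease arrow counts elsewhere while it increases them locally. A reasonable approach is induction on $n$: restrict to a suitable mutation-infinite full subquiver where the inductive hypothesis applies, then use the connectedness of $Q$ via a path joining a high-multiplicity pair to each remaining vertex, and perform carefully chosen mutations along that path to transport large multiplicities outward without losing the multiplicities already built up. The base case $n=3$ can be dispatched by direct case analysis of three-vertex mutation classes, where mutation-infinite connected quivers are either already abundant or become so after at most two mutations, and a final application of Lemma~\ref{lem-fork-mutation} at any non-source, non-sink vertex of the resulting abundant quiver produces a fork.
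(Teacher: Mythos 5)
First, note that the paper does not prove this statement: it is imported verbatim from Warkentin's thesis (Theorem 3.2 there), so there is no in-paper proof to compare against, and your attempt must stand on its own. Your ``if'' direction is essentially complete and is the standard argument: a fork has no sources or sinks, so Lemma \ref{lem-fork-mutation} applies at any vertex other than the point of return and yields forks of strictly increasing arrow count, hence infinitely many distinct labelled quivers in $[Q]$. (Your justification that such a $k$ is neither a source nor a sink is slightly off --- abundance alone does not orient the extra arrows --- but the fork inequalities $f_{ji}>f_{ir}$ and $f_{ji}>f_{rj}$ force $F^{-}(r)$ and $F^{+}(r)$ to be nonempty and force arrows from $F^{+}(r)$ into $F^{-}(r)$, which is what actually rules out sources and sinks.)

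The ``only if'' direction has a genuine gap, and you have named it yourself without closing it: passing from ``some pair of vertices acquires large multiplicity'' to ``the quiver can be mutated to a fork.'' The sentence about performing ``carefully chosen mutations along that path to transport large multiplicities outward without losing the multiplicities already built up'' is precisely the hard content of the theorem, not a reduction of it; as you observe, mutation is non-monotone, and no mechanism is offered for protecting the multiplicities already created. Worse, the intended endgame is broken even if global abundance were achieved: Lemma \ref{lem-fork-mutation} applies only to forks and to abundant \emph{acyclic} quivers, not to arbitrary abundant quivers. The Markov quiver (three vertices, two arrows between each pair, cyclically oriented) is connected, abundant, and has no sources or sinks, yet it is mutation-finite and is not a fork --- the strict inequality $f_{ji}>f_{ir}$ fails since all multiplicities equal $2$ --- so ``reach an abundant quiver and apply the lemma once'' cannot be the closing move. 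A correct argument must land on an abundant acyclic quiver or on a fork directly, and that is where the real work in Warkentin's proof lies; your proposed three-vertex base case (``abundant after at most two mutations'') is likewise unsubstantiated and, in view of the Markov example, would in any case not suffice without an acyclicity or fork condition.
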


\section{Acyclic Orderings} \label{sec-acyclic}

In order to keep track of the color of vertices in a fork, we will first need some results on the acyclic orderings arising from abundant acyclic quivers and forks.

\begin{Def}{\textup{\cite{bang-jensen_classes_2018}}} \label{def-acyclic-ordering}
    Let $Q$ be a quiver on $n$ vertices.
    Then we say that an ordering $v_1 \prec v_2 \prec \dots \prec v_n$ on the vertices is acyclic if whenever there exists an arrow $v_i \to v_j$ then $v_i \prec v_j$.
\end{Def}

\begin{Prop}{\textup{\cite[Proposition 3.1.2]{bang-jensen_classes_2018}}} \label{prop-acyclic-ordering}
    Let $Q$ be an acyclic quiver.
    Then there exists an acyclic ordering of its vertices.
\end{Prop}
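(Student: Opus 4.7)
The plan is to proceed by strong induction on the number of vertices $n = |Q_0|$. The base case $n = 1$ is trivial since a single vertex is trivially acyclically ordered.

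For the inductive step, the key observation I would establish first is that any finite acyclic quiver contains at least one source, i.e., a vertex with no incoming arrows. I would prove this by contradiction: suppose every vertex of $Q$ has at least one incoming arrow. Then, starting from any vertex $v_0$, I can pick an arrow $v_1 \to v_0$, then an arrow $v_2 \to v_1$, and so on, producing an infinite sequence $\ldots \to v_2 \to v_1 \to v_0$. Since $|Q_0|$ is finite, by the pigeonhole principle some vertex must repeat in this sequence, say $v_i = v_j$ with $i > j$. Then the chain $v_i \to v_{i-1} \to \cdots \to v_j = v_i$ is a directed cycle, contradicting the hypothesis that $Q$ is acyclic.

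Once a source $v_1$ is in hand, I consider the full subquiver $Q' = Q \setminus \{v_1\}$ on the remaining $n-1$ vertices. Any directed cycle in $Q'$ would be a directed cycle in $Q$, so $Q'$ is still acyclic. By the inductive hypothesis, $Q'$ admits an acyclic ordering $v_2 \prec v_3 \prec \cdots \prec v_n$. I then extend this to an ordering on all of $Q_0$ by placing $v_1$ first: $v_1 \prec v_2 \prec \cdots \prec v_n$. To verify acyclicity of this ordering, consider any arrow $a \in Q_1$. If $a$ does not touch $v_1$, then both endpoints lie in $Q'$ and are correctly ordered by the inductive hypothesis. If $a$ does touch $v_1$, then since $v_1$ is a source, $a$ must point out of $v_1$, so $s(a) = v_1 \prec t(a)$ as required.

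There is no real obstacle here: the entire content of the argument lies in the source-existence lemma, which is a standard finiteness/pigeonhole argument. The inductive bookkeeping is routine once that lemma is established. One small point worth noting is that the argument relies in an essential way on $Q_0$ being finite (implicit in the paper's setup via Definition 2.1, since the quivers considered have finite vertex sets), as the source-existence claim fails for infinite acyclic quivers such as $\cdots \to v_{-2} \to v_{-1} \to v_0$.
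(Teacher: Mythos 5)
Your proof is correct and complete: the source-existence lemma via the pigeonhole argument, followed by induction on the number of vertices after deleting a source, is the standard proof of this fact (topological sorting of a finite acyclic digraph). The paper itself gives no proof, citing the result directly from Bang-Jensen and Gutin, where essentially this same argument appears; your remark that finiteness of $Q_0$ is essential is a worthwhile observation and consistent with the paper's setup.
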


Naturally, if a quiver is abundant and acyclic, its acyclic ordering must be unique.
We can then track the acyclic orderings of every acyclic quiver in its source mutation cycle.

\begin{Lem} \label{lem-source-acyclic-mutation-order}
    Let $Q$ be an abundant acyclic quiver, where $v_1 \prec v_2 \prec \dots \prec v_n$ is its unique acyclic ordering.
    Then the mutation sequence $\bw_j = [v_1, v_2, \dots, v_j]$ is a source mutation sequence such that $Q^j = \mu_{\bw_j}(Q)$ has the unique acyclic ordering 
    $$v_{j+1} \prec  \dots \prec v_n \prec v_1 \prec \dots \prec v_{j-1} \prec v_{j}$$ 
    for all $j \leq n$.
\end{Lem}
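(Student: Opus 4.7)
The plan is to prove this by induction on $j$, exploiting the fact that mutating at a source of any quiver is essentially a trivial operation that only reverses the arrows incident to the source, without creating any new arrows. The reason is that in the mutation procedure, one first adds a composition arrow $a \to b$ for every path $a \to v \to b$ through the mutated vertex $v$; but if $v$ is a source, there are no arrows of the form $a \to v$, so no new arrows appear and no 2-cycles are formed in the third step. Hence $\mu_v$ reduces to Step 2, the reversal of all arrows incident to $v$.

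For the base case $j = 1$, the vertex $v_1$ is the minimum of the acyclic ordering, so it is a source of $Q$. By the observation above, $Q^1 = \mu_{v_1}(Q)$ has exactly the same underlying multigraph as $Q$, but all arrows incident to $v_1$ are reversed; in particular $Q^1$ is still abundant and, since $v_1$ has become a sink while the relative orientations among $v_2, \dots, v_n$ are unchanged, $Q^1$ is acyclic with ordering $v_2 \prec v_3 \prec \dots \prec v_n \prec v_1$. Abundance of $Q^1$ together with Proposition \ref{prop-acyclic-ordering} (and the uniqueness noted just before the lemma) ensures that this is the unique acyclic ordering.

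For the inductive step, suppose $Q^{j-1}$ is abundant acyclic with unique acyclic ordering $v_j \prec v_{j+1} \prec \dots \prec v_n \prec v_1 \prec \dots \prec v_{j-1}$. Then $v_j$ is the minimum of this ordering and hence a source of $Q^{j-1}$, so $\mu_{v_j}$ extends $\bw_{j-1}$ to a source mutation sequence $\bw_j$. Applying the same observation as in the base case, $Q^j = \mu_{v_j}(Q^{j-1})$ is the same abundant quiver with all arrows at $v_j$ reversed, hence acyclic with unique ordering $v_{j+1} \prec \dots \prec v_n \prec v_1 \prec \dots \prec v_{j-1} \prec v_j$, completing the induction.

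Because the proof reduces to the elementary fact that mutation at a source merely reverses local arrows, there is no real obstacle here; the only thing to be careful about is to verify at each step that the quiver remains abundant and acyclic so that we may apply the inductive hypothesis (and in particular so that the acyclic ordering remains unique), but both properties are immediate from the fact that only the orientations incident to the mutated vertex change while all arrow multiplicities are preserved.
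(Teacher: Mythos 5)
Your proposal is correct and follows essentially the same route as the paper: the paper's proof likewise observes that mutating at the source $v_1$ merely turns it into a sink without affecting any other arrows, and then iterates this to cycle the acyclic ordering. Your version simply makes the induction and the preservation of abundance/uniqueness more explicit than the paper does.
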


\begin{proof}
    Mutating $Q$ at $v_1$ will give us the quiver $Q^1$ with unique acyclic ordering $v_2 \prec v_3 \prec \dots \prec v_n \prec v_1$, as the source of $Q$ will become the sink of $Q^1$, affecting no other arrows.
    As such, if we keep mutating along $\bw_i$, we cycle through the acyclic ordering.
    Thus $Q^i = \mu_{\bw_i}(Q)$ has the unique acyclic ordering $v_{i+1} \prec v_{i+2} \prec \dots \prec v_n \prec v_1 \prec \dots \prec v_{i-1} \prec v_{i}$ for all $i < n$.
    If $i = n$, then we have arrived back at our original $Q$.
\end{proof}

As removing the point of a return from a fork produces an abundant acyclic quiver, we can also examine the acyclic ordering produced by subquivers of a fork after mutation not at the point of return.

\begin{Lem} \label{lem-fork-mutate-order}
    Let $F$ be a fork with point of return $r$, where $v_1 \prec v_2 \prec \dots \prec v_{n-1}$ is the unique acyclic ordering on $F \setminus \{r\}$.
    For some vertex $v_j$, let $F' = \mu_{v_j}(F)$.
    Then $F' \setminus \{v_j\}$ has unique acyclic ordering 
    $$r \prec v_1 \prec \dots \prec v_{j-1} \prec v_{j+1} \prec \dots \prec v_{n-1}$$ 
    if $r \to v_j$ in $F$.
    If instead $v_j \to r$ in $F$, we have the unique acyclic ordering
    $$ v_1 \prec \dots \prec v_{j-1} \prec v_{j+1} \prec \dots \prec v_{n-1} \prec r$$
\end{Lem}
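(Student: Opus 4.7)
The plan is to reduce the claim to a direct computation of arrow directions in $F'\setminus\{v_j\}$. By Lemma \ref{lem-fork-mutation}, since $v_j$ is not the point of return of $F$ (and by abundance cannot be a source or sink of $F$), $F'=\mu_{v_j}(F)$ is itself a fork with point of return $v_j$. Hence $F'\setminus\{v_j\}$ is abundant and acyclic, so by Proposition \ref{prop-acyclic-ordering} (combined with abundance) it has a \emph{unique} acyclic ordering. It therefore suffices to verify that the ordering claimed in the statement is actually respected by every arrow of $F'\setminus\{v_j\}$.

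First I would check that the induced ordering on $\{v_i:i\neq j\}$ is unchanged. A mutation at $v_j$ alters the arrow between two vertices $v_i,v_k\neq v_j$ only when there is a directed path $v_i\to v_j\to v_k$ or $v_k\to v_j\to v_i$ in $F$, and in that case the added arrows respect the original acyclic ordering $v_1\prec\dots\prec v_{n-1}$ of $F\setminus\{r\}$ (since any such path goes from a smaller index through $v_j$ to a larger one, and abundance guarantees that no arrow between $v_i$ and $v_k$ is reversed). This reduces the lemma to checking the arrows between $r$ and the $v_i$'s, $i\neq j$.

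Now I would split into the two cases. In Case 1, $r\to v_j$ in $F$, so only paths of the form $r\to v_j\to v_i$ contribute through mutation. If $v_j\to v_i$ in $F$ (so $i>j$), then $f_{rj}f_{ji}$ arrows from $r$ to $v_i$ are added: if $v_i\in F^+(r)$ this reinforces $r\to v_i$, while if $v_i\in F^-(r)$ the fork inequality $f_{ji}>f_{ir}$ together with abundance ($f_{rj}\geq 2$) forces the net to point $r\to v_i$. If instead $v_i\to v_j$ (so $i<j$), the $r$--$v_i$ arrow is unchanged; but the fork condition rules out $v_i\in F^-(r)$ (it would force $v_j\to v_i$), so $v_i\in F^+(r)$ and again $r\to v_i$. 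Case 2 ($v_j\to r$) is handled by the symmetric argument: only paths $v_i\to v_j\to r$ contribute, and the analogous fork inequalities (plus abundance) force $v_i\to r$ for every $i\neq j$. Combining with the preserved ordering on the $v_i$'s, one gets exactly the two orderings in the statement.

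The main obstacle is the verification in the two subcases where the $r$--$v_i$ arrow has the ``wrong'' initial direction, namely when $v_i\in F^-(r)$ in Case 1 and $v_i\in F^+(r)$ in Case 2. In each such subcase the defining fork inequalities $f_{ji}>f_{ir}$ and $f_{ji}>f_{rj}$, combined with abundance, must be used to show that the path contribution strictly dominates the original arrow count and reverses its sign. Apart from this numerical check, the rest of the argument is a bookkeeping exercise using the acyclic ordering of $F\setminus\{r\}$ and the structure of quiver mutation.
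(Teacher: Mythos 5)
Your proposal is correct, and its overall skeleton (use Lemma \ref{lem-fork-mutation} to see that $F'\setminus\{v_j\}$ is abundant acyclic, hence has a unique total acyclic ordering; observe that the ordering on $\{v_i : i\neq j\}$ is untouched; then place $r$) matches the paper. Where you diverge is in how $r$ is placed: you verify the direction of \emph{every} arrow between $r$ and each $v_i$ after mutation, invoking the fork inequalities $f_{ji}>f_{ir}$, $f_{ji}>f_{rj}$ together with abundance in the two subcases where the original arrow points the ``wrong'' way. The paper instead exploits uniqueness of the total order more aggressively: since the sub-ordering on the $v_i$'s is known, it suffices to check the single arrow between $r$ and $v_1$ (resp.\ between $v_{n-1}$ and $r$), and that one arrow is unaffected by the mutation at $v_j$ because no directed path through $v_j$ touches it --- so no inequality computation is needed at all. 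Your version costs more bookkeeping but is uniform in $j$; the paper's shortcut is cleaner but, as written, silently assumes $j\neq 1$ (resp.\ $j\neq n-1$), the cases your exhaustive check handles without special pleading. One small slip: forks have no sources or sinks because of the fork condition and non-acyclicity, not ``by abundance'' --- an abundant acyclic quiver certainly has a source and a sink --- though the fact itself is true and is also used without proof in the paper.
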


\begin{proof}
    First, note that the direction of arrows in $F \setminus \{r,v_{j}\}$ is completely unchanged after mutation at $v_{j}$, as $F \setminus \{r\}$ is abundant acyclic.
    Thus the unique acyclic ordering of $F' \setminus \{r,v_{j}\}$ is $v_1 \prec \dots \prec v_{j-1} \prec v_{j+1} \prec \dots \prec v_{n-1}$.
    As $F' \setminus \{v_j\}$ is abundant acyclic, we need only add $r$ into the ordering.

    Before doing so, note that $r \to v_1$ and $v_n \to r$ in $F$, as forks do not have sources or sinks.
    Now suppose that $r \to v_j$ in $F$.
    Then $r \to v_1$ in $F'$, and the unique acyclic ordering of $F' \setminus \{v_j\}$ is $r \prec v_1 \prec \dots \prec v_{j-1} \prec v_{j+1} \prec \dots \prec v_{n-1}$.
    If instead $v_j \to r$ in $F$, then $v_n \to r$ in $F'$, and the unique acyclic ordering of $F' \setminus \{v_j\}$ is $ v_1 \prec \dots \prec v_{j-1} \prec v_{j+1} \prec \dots \prec v_{n-1} \prec r$.
\end{proof}

Lemmata \ref{lem-source-acyclic-mutation-order} and \ref{lem-fork-mutate-order} combine to give us a thorough understanding of the two acyclic orderings that can be found by performing source mutations on $F \setminus \{r\}$.

\begin{Lem} \label{lem-source-fork-mutation-order}
    Let $F$ be a fork with point of return $r$, where $v_1 \prec v_2 \prec \dots \prec v_{n-1}$ is the unique acyclic ordering on $F \setminus \{r\}$.
    If $\bw_j = [v_1, v_2, \dots, v_j]$ for $j \leq n-1$, let $F^j = \mu_{\bw_j}(F)$.
    Then $F^j \setminus \{v_j\}$ has unique acyclic ordering 
    $$r \prec v_{j+1} \prec \dots \prec v_{n-1} \prec v_{1} \prec \dots \prec v_{j-1}$$
    and $F^j \setminus \{r\}$ has unique acyclic ordering 
    $$v_{j+1} \prec  \dots \prec v_{n-1} \prec v_1 \prec \dots \prec v_{j-1} \prec v_{j}.$$
\end{Lem}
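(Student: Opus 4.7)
I would prove this by induction on $j$. Before setting up the induction, note that the two claimed orderings together encode that $F^j$ is a fork with point of return $v_j$: in the first ordering $v_j$ is the missing vertex, and in the second $r$ is missing while $v_j$ sits as the sink, so if $F^j$ is a fork these two orderings are consistent. The induction will simultaneously produce both orderings and use Lemma \ref{lem-fork-mutation} to keep us inside the class of forks at every step.

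First I would handle the base case $j = 1$. Because $v_1$ is the unique source of the abundant acyclic quiver $F \setminus \{r\}$ and $F$ itself has no source (a fork never does), the arrow between $r$ and $v_1$ must satisfy $r \to v_1$. Since $v_1$ is neither a source, a sink, nor the point of return of $F$, Lemma \ref{lem-fork-mutation} says $F^1$ is a fork with point of return $v_1$, and Lemma \ref{lem-fork-mutate-order} in its $r \to v_j$ case immediately delivers the first ordering for $F^1 \setminus \{v_1\}$. For the second ordering, note that the only arrows into $v_1$ in $F$ come from $r$, so the only paths through $v_1$ are of the form $r \to v_1 \to v_k$; hence mutation at $v_1$ leaves every arrow in $F \setminus \{r, v_1\}$ unchanged, while flipping the arrows $v_1 \to v_k$ to $v_k \to v_1$. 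Thus in $F^1 \setminus \{r\}$ the vertex $v_1$ is a sink and the rest retains its old ordering, giving $v_2 \prec \dots \prec v_{n-1} \prec v_1$.

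For the inductive step, assume both orderings for $F^{j-1}$. Then $v_j$ is the source of $F^{j-1} \setminus \{r\}$, and because $F^{j-1}$ is a fork it has no source, so $r \to v_j$ in $F^{j-1}$. Since $v_j$ is not the point of return of $F^{j-1}$ (that is $v_{j-1}$) and is neither a source nor a sink of $F^{j-1}$, Lemma \ref{lem-fork-mutation} gives that $F^j$ is a fork with point of return $v_j$. Applying Lemma \ref{lem-fork-mutate-order} to $F^{j-1}$ with the relabeling $u_1 = v_j,\, u_2 = v_{j+1},\, \dots,\, u_{n-j} = v_{n-1},\, u_{n-j+1} = v_1,\, \dots,\, u_{n-1} = v_{j-1}$ and the direction $r \to u_1$ then yields the first claimed ordering for $F^j \setminus \{v_j\}$. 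For the second, repeat the base-case argument at level $j$: inside $F^{j-1}$ the only vertex pointing to $v_j$ is $r$, so the mutation at $v_j$ leaves the full subquiver of $F^{j-1} \setminus \{r\}$ induced on the other vertices untouched while flipping every $v_j \to v_k$ to $v_k \to v_j$, making $v_j$ the sink of $F^j \setminus \{r\}$ and producing the claimed ordering.

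The main obstacle is not any single computation but the bookkeeping needed to certify the forkness hypothesis of Lemma \ref{lem-fork-mutate-order} at every step and to identify the correct source/sink in the relevant subquiver; once the invariant ``$F^{j-1}$ is a fork whose induced subquiver $F^{j-1} \setminus \{r\}$ has $v_j$ as its unique source'' is in place, the induction is a short application of Lemmata \ref{lem-source-acyclic-mutation-order} and \ref{lem-fork-mutate-order} together with the elementary observation about how mutation at a source of $F^{j-1} \setminus \{r\}$ acts on the remaining subquiver.
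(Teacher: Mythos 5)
Your base case and the second-ordering bookkeeping are fine, but the inductive step for the first ordering contains a genuine gap: you apply Lemma \ref{lem-fork-mutate-order} to $F^{j-1}$ with $r$ playing the role of the point of return, yet the point of return of $F^{j-1}$ is $v_{j-1}$, not $r$ (as you yourself note one sentence earlier). This is not a harmless relabeling. Lemma \ref{lem-fork-mutate-order} is stated for a fork \emph{with point of return} $r$, and in the case you invoke---mutating at the source $u_1$ of $F^{j-1}\setminus\{r\}$ with $r \to u_1$---its proof must decide the direction of the arrow between $r$ and the remaining vertices after mutation, which for a vertex $u$ with $u \to r$ requires the fork inequality $f_{r u_1} f_{u_1 u} > f_{u r}$ centered at $r$. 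That inequality is exactly what you do not have: the fork condition of $F^{j-1}$ is centered at $v_{j-1}$, and applied to $b \in (F^{j-1})^{-}(v_{j-1})$ and $r \in (F^{j-1})^{+}(v_{j-1})$ it gives $f_{rb} > f_{b v_{j-1}}$, which shows $r$ is \emph{not} a valid point of return of $F^{j-1}$. So the lemma simply does not apply as you have set it up.

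The conclusion is still reachable, by either of two repairs. The paper's route applies Lemma \ref{lem-fork-mutate-order} to $F^{j}$ with its \emph{true} point of return $v_{j}$, mutating at $v_{j+1}$: the second ordering forces $v_{j+1} \to v_{j}$, so the ``mutated vertex points to the point of return'' case places $v_{j}$ at the end and yields the ordering of $F^{j+1}\setminus\{v_{j+1}\}$ directly from that of $F^{j}\setminus\{v_{j}\}$. Alternatively, you can salvage your own step with a direct argument that does not route through Lemma \ref{lem-fork-mutate-order} at all: the inductive hypothesis on the \emph{first} ordering says $r$ is the source of $F^{j-1}\setminus\{v_{j-1}\}$, so $r \to v_i$ in $F^{j-1}$ for every $i \neq j-1$; mutation at $v_j$ can only reinforce the arrow $r \to v_{j+1}$ (the only length-two paths through $v_j$ point from $r$ outward), and since $F^{j}\setminus\{v_j\}$ is abundant acyclic (by Lemma \ref{lem-fork-mutation}) with the old order on $v_{j+1}\prec\cdots\prec v_{j-1}$ preserved, $r \to v_{j+1}$ forces $r$ to be its source. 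Either way, some use of the actual fork structure of $F^{j-1}$ or $F^{j}$ at its genuine point of return is needed; as written, your appeal to Lemma \ref{lem-fork-mutate-order} does not supply it.
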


\begin{proof}
    Lemma \ref{lem-source-acyclic-mutation-order} proves the desired result for the acyclic ordering on $F^j \setminus \{r\}$, as $F \setminus \{r\}$ is abundant acyclic and $\bw_j$ is a source mutation sequence.
    We will then show that $r$ is the source of $F^j \setminus \{v_j\}$ by induction.
    As $r \to v_1$ in $F$, we know that $F^1 \setminus \{v_1\}$ must have acyclic ordering $r \prec v_2 \prec v_{3} \prec \dots \prec v_{n-1}$ by Lemma \ref{lem-fork-mutate-order}, completing the base case.

    Assume then that $F^j \setminus \{v_j\}$ has acyclic ordering $r \prec v_{j+1} \prec \dots \prec v_{n-1} \prec v_{1} \prec \dots \prec v_{j-1}$.
    As $F^j \setminus \{r\}$ has unique acyclic ordering $v_{j+1} \prec  \dots \prec v_{n-1} \prec v_1 \prec \dots \prec v_{j-1} \prec v_{j}$, this forces $v_{j+1} \to v_{j}$ in $F^j$.
    Thus, Lemma \ref{lem-fork-mutate-order} tells us that $F^{j+1} \setminus \{v_{j+1}\}$ has unique acyclic ordering $r \prec v_{j+2} \prec \dots \prec v_{n-1} \prec v_{1} \prec \dots \prec v_{j-1} \prec v_{j}$, completing the inductive case.
\end{proof}

\begin{Cor} \label{cor-source-r-fork-mutation-order}
    Let $F$ be a fork with point of return $r$, where $v_1 \prec v_2 \prec \dots \prec v_{n-1}$ is the unique acyclic ordering on $F \setminus \{r\}$.
    If $\bw = [v_1, v_2, \dots, v_j, r]$ for $j \leq n-1$, let $F' = \mu_{\bw}(F)$. 
    Then $F' \setminus \{v_j\}$ has unique acyclic ordering 
    $$v_{j+1} \prec \dots \prec v_{n-1} \prec v_{1} \prec \dots \prec v_{j-1} \prec r$$ 
    and $F' \setminus \{r\}$ has unique acyclic ordering 
    $$v_{j} \prec v_{j+1} \prec  \dots \prec v_{n-1} \prec v_1 \prec \dots \prec v_{j-1}$$
\end{Cor}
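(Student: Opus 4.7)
The plan is to first invoke Lemma~\ref{lem-source-fork-mutation-order} on the prefix $\bw_j = [v_1, \ldots, v_j]$ to obtain $F^j = \mu_{\bw_j}(F)$. This gives that $r$ is the source of $F^j \setminus \{v_j\}$ with ordering $r \prec v_{j+1} \prec \cdots \prec v_{n-1} \prec v_1 \prec \cdots \prec v_{j-1}$, and that $v_j$ is the sink of $F^j \setminus \{r\}$ with ordering $v_{j+1} \prec \cdots \prec v_{n-1} \prec v_1 \prec \cdots \prec v_{j-1} \prec v_j$. Since $F' = \mu_r(F^j)$, the work reduces to analyzing this final mutation at $r$ on each of the two subquivers of interest.

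For the first ordering, I would observe that mutation at $r$ commutes with deleting $v_j$, since any path through $r$ that encounters $v_j$ is irrelevant to the induced subquiver on $F' \setminus \{v_j\}$. Thus $F' \setminus \{v_j\} = \mu_r(F^j \setminus \{v_j\})$. Because $F^j \setminus \{v_j\}$ is abundant acyclic with $r$ as its unique source, a single source mutation at $r$ simply flips its outgoing arrows, and Lemma~\ref{lem-source-acyclic-mutation-order} applied with one step places $r$ at the end of the ordering, giving the first claim.

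For the second ordering, I would first establish that $v_j \to r$ in $F^j$ by induction on $j$. The base case uses the fact that forks have no sources, so $r \to v_1$ in $F$, and mutation at $v_1$ reverses this to $v_1 \to r$ in $F^1$. For the inductive step, Lemma~\ref{lem-source-fork-mutation-order} tells us that $r$ is the source of $F^j \setminus \{v_j\}$, so $r \to v_{j+1}$ in $F^j$, and mutation at $v_{j+1}$ yields $v_{j+1} \to r$ in $F^{j+1}$. With $v_j \to r$ and $r \to v_i$ for each $i \neq j$ (again from the ordering on $F^j \setminus \{v_j\}$) in $F^j$, mutation at $r$ adds new arrows $v_j \to v_i$ via the paths $v_j \to r \to v_i$. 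These new arrows need to dominate the existing arrows $v_i \to v_j$ (present because $v_j$ is the sink of $F^j \setminus \{r\}$) so that $v_j$ becomes the source of $F' \setminus \{r\}$, with the rest of the ordering inherited from $F^j \setminus \{r, v_j\}$.

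The main obstacle I expect is precisely this dominance argument: verifying quantitatively that the newly created $v_j \to v_i$ arrows outnumber the pre-existing $v_i \to v_j$ arrows in $F^j \setminus \{r\}$. This requires propagating the fork abundance inequalities $f_{ji} > f_{ir},\, f_{rj}$ of the original fork $F$ through the source mutations $v_1, \ldots, v_j$ and the subsequent mutation at $r$, which amounts to some careful arithmetic bookkeeping on the arrow multiplicities (or an appeal to Warkentin's structural results on mutation-preserving properties of forks).
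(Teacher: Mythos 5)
Your reduction to $F^j=\mu_{\bw_j}(F)$ via Lemma~\ref{lem-source-fork-mutation-order} is exactly the intended route, and your treatment of the first ordering is complete and correct: $F'\setminus\{v_j\}=\mu_r(F^j\setminus\{v_j\})$ because mutation at $r$ commutes with deleting a vertex other than $r$, and since $r$ is the unique source of the abundant acyclic quiver $F^j\setminus\{v_j\}$, the one-step case of Lemma~\ref{lem-source-acyclic-mutation-order} moves $r$ to the end. Your derivation that $v_j\to r$ in $F^j$ (from $r\to v_j$ in $F^{j-1}$, which Lemma~\ref{lem-source-fork-mutation-order} supplies) is also correct.

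The gap is that you stop at the ``dominance'' step for the second ordering and flag it as an open obstacle, when the paper has already done that work: it is precisely Lemma~\ref{lem-fork-mutate-order}. By Lemma~\ref{lem-fork-mutation}, $F^j$ is a fork with point of return $v_j$ (you mutated the fork $F^{j-1}$ at $v_j$, which is not its point of return, and forks have no sources or sinks). Now apply Lemma~\ref{lem-fork-mutate-order} to the fork $F^j$, whose acyclic part $F^j\setminus\{v_j\}$ is ordered $r\prec v_{j+1}\prec\dots\prec v_{j-1}$, mutating at $r$: since the point of return $v_j$ has an arrow \emph{to} the mutated vertex $r$, the lemma places $v_j$ at the front, giving $v_j\prec v_{j+1}\prec\dots\prec v_{n-1}\prec v_1\prec\dots\prec v_{j-1}$ on $F'\setminus\{r\}$. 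No propagation of multiplicity inequalities through the whole source-mutation sequence is needed. (If you do want the count by hand, it is one line rather than ``careful bookkeeping'': the fork condition for $F^j$ at its point of return $v_j$, with each $v_i$ in $(F^j)^-(v_j)$ and $r$ in $(F^j)^+(v_j)$, says the number of arrows $r\to v_i$ strictly exceeds the number of arrows $v_i\to v_j$; since there are at least two arrows $v_j\to r$, the composite arrows $v_j\to v_i$ created at mutation strictly outnumber the existing $v_i\to v_j$.)
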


\section{Sign-coherent Ice Quivers} \label{sec-sc}

We now give a definition of two different types of quivers with sign-coherent $c$-vectors, where the name uniformly sign-coherent comes from the term ``uniform column sign-coherent'' given by Cao and Li \cite{cao_uniform_2019}.

\begin{Def} \label{def-usc-quiver}
    An ice quiver is said to be \textit{uniformly sign-coherent} if every quiver in its mutation class has every vertex red, green, or blue.
    Equivalently, every $c$-vector has either negative sign, has positive sign, or is the zero vector.
    In this paper, we assume that not every vertex of a uniformly sign-coherent quiver is blue.
    If we do use such a quiver, we will refer to it as a \textit{trivially sign-coherent} quiver.
    Finally, if every quiver in a mutation class has every vertex exclusively red or green, we say it is \textit{strictly sign-coherent}.
    Equivalently, every $c$-vector has either negative or positive sign.
\end{Def}

Introducing strictly sign-coherent quivers allows us to speak about ice quivers that are mutation equivalent to a framed or coframed quiver.
The sign-coherence theorem forces all such quivers to be strictly sign-coherent.
We will prove some of our results for the more general uniformly sign-coherent case, but the strictly sign-coherent case is all we need, beginning with a result that handles how colors change after mutation in relation to the acyclic ordering of an abundant acyclic quiver.

\begin{Lem} \label{lem-color-abundant acyclic}
    Let $Q$ be an abundant acyclic and uniformly sign-coherent quiver, where $v_1 \prec v_2 \prec \dots \prec v_n$ is its unique acyclic ordering.
    Pick a vertex $v_j$ of $Q$ for $j \in Q_0$, and let $Q' = \mu_{v_j} (Q)$.
    \begin{itemize}
        \item If $v_j$ is blue in $Q$, then the color of every vertex is unchanged after mutation.

        \item If $v_j$ is green in $Q$, then the color of every vertex $v_i$ of $Q$ such that $v_j \prec v_i$ is unchanged after mutation and $v_j$ is red in $Q'$.

        \item If $v_j$ is red in $Q$, then the color of every vertex $v_i$ of $Q$ such that $v_i \prec v_j$ is unchanged after mutation and $v_j$ is green in $Q'$.
    \end{itemize}
\end{Lem}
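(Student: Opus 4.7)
The plan is to read off the color change at each vertex from the explicit $c$-vector mutation rule (Definition \ref{def-matrix-mutation}) together with Remark \ref{rmk-red-green-c-vector}, using the acyclic ordering to control the signs of the entries $b_{v_iv_j}$. The hypothesis of uniform sign-coherence is what makes the update rule have a clean row-by-row interpretation: since $c_{v_j}$ is sign-coherent, the condition ``$b_{v_iv_j} c_{v_j k} > 0$ for some $k$'' is equivalent to ``$b_{v_iv_j}$ has the same sign as the nonzero components of $c_{v_j}$,'' independently of $k$.

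First I would dispense with the diagonal: mutation at $v_j$ replaces the row $c_{v_j}$ by $-c_{v_j}$, so by Remark \ref{rmk-red-green-c-vector} the color of $v_j$ behaves as claimed in each of the three cases (blue stays blue, green becomes red, red becomes green). Next, for $i \ne j$ the rule in Definition \ref{def-matrix-mutation} reads
\[
c_{v_i}^{[v_j]} \;=\; c_{v_i} + |b_{v_iv_j}|\, c_{v_j} \quad \text{if } b_{v_iv_j}\, c_{v_j k} > 0 \text{ for some } k,
\]
and $c_{v_i}^{[v_j]} = c_{v_i}$ otherwise. In the blue case $c_{v_j}=0$, the condition never triggers, and every row is preserved; this handles the first bullet.

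For the green case, the nonzero components of $c_{v_j}$ are strictly positive, so the update fires precisely when $b_{v_iv_j} > 0$, i.e.\ precisely when there is an arrow $v_i \to v_j$ in $Q$. Because $Q$ is acyclic with ordering $v_1 \prec \cdots \prec v_n$, such an arrow can only exist when $v_i \prec v_j$ (and abundance ensures one actually does). Hence whenever $v_j \prec v_i$ we have $b_{v_iv_j} \le 0$, the update is skipped, and $c_{v_i}^{[v_j]} = c_{v_i}$, so the color of $v_i$ is unchanged. The red case is symmetric: the nonzero components of $c_{v_j}$ are strictly negative, so the update fires precisely when $b_{v_iv_j} < 0$, i.e.\ when $v_j \to v_i$, i.e.\ when $v_j \prec v_i$. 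Consequently, for every $v_i$ with $v_i \prec v_j$ the row is untouched and the color preserved.

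There is no real obstacle here; the main thing to be careful about is the bookkeeping between the sign of $b_{v_iv_j}$ (determined by the acyclic ordering) and the sign of $c_{v_j}$ (determined by the color of $v_j$), together with the observation that uniform sign-coherence is what allows us to treat these signs at the row level rather than component by component.
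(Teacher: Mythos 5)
Your proof is correct and is essentially the paper's argument, just phrased in the language of the $C$-matrix mutation rule rather than directed $2$-paths through $v_j$ in the quiver: in both cases the point is that sign-coherence of $c_{v_j}$ plus the acyclic ordering determines exactly which rows the update $c_{v_i} \mapsto c_{v_i} + |b_{v_iv_j}|c_{v_j}$ touches, and the diagonal row is simply negated.
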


\begin{proof}
    If $v_j$ is blue in $Q$, then there are no arrows between $v_j$ and frozen vertices in $Q$.
    As such, mutation at $v_j$ will neither add or subtract arrows between mutable and frozen vertices in $Q'$.
    Thus, the color of every vertex in $Q$ is unchanged after mutation at $v_j$.

    If $v_j$ is green in $Q$, then there are arrows going from $v_j$ to frozen vertices.
    Thus, for any vertex $v_i$ of $Q$ such that $v_j \prec v_i$, we know that there is no path of length 2 from a frozen vertex to $v_i$ or a path of length 2 from $v_i$ to a frozen vertex passing through $v_j$.
    This follows from the fact that $v_j \prec v_i$ if and only if there exists and arrow $v_j \to v_i$.
    Naturally, this means that color of the vertices $v_i$ such that $v_j \prec v_i$ is unchanged after mutation at $v_j$, and the color of $v_j$ must be red in $Q'$.
    A similar argument proves our result for the case where $v_j$ is red in $Q$.
\end{proof}

From Lemma \ref{lem-color-abundant acyclic}, we have an immediate extension to forks.

\begin{Cor} \label{cor-color-fork}
    Let $F$ be a uniformly sign-coherent fork with point of return $r$, where $v_1 \prec v_2 \prec \dots \prec v_{n-1}$ is the unique acyclic ordering on $F \setminus \{r\}$.
    Pick a vertex $v_j$ of $F \setminus \{r\}$ for $j \in [n-1]$, and let $F' = \mu_{v_j} (F)$.
    \begin{itemize}
        \item If $v_j$ is blue in $F$, then the color of every vertex is unchanged after mutation.

        \item If $v_j$ is green in $F$, then the color of every vertex $v_i$ of $F \setminus \{r\}$ such that $v_j \prec v_i$ is unchanged after mutation, the color of $r$ is unchanged if $v_j \in F^-(r)$,  and $v_j$ is red in $F'$.

        \item If $v_j$ is red in $F$, then the color of every vertex $v_i$ of $F \setminus \{r\}$ such that $v_i \prec v_j$ is unchanged after mutation, the color of $r$ is unchanged if $v_j \in F^+(r)$, and $v_j$ is green in $F'$.
    \end{itemize}
\end{Cor}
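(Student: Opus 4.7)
The plan is to imitate the length-two path argument of Lemma \ref{lem-color-abundant acyclic} applied to $F$, and then handle $r$ as an extra case. The key observation is that mutation at $v_j$ alters the number of arrows between a vertex $v$ and a frozen vertex $f$ exactly when there is a directed length-two path $v \to v_j \to f$ or $f \to v_j \to v$ in $F$; all other arrows to frozen vertices are untouched. Consequently, the color of $v$ is preserved whenever no length-two path through $v_j$ connects $v$ with a frozen vertex.

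First I dispose of the blue case: if $v_j$ has no arrows to any frozen vertex, then neither type of length-two path through $v_j$ exists, and every vertex of $F$, including $r$, keeps its color. For the green case, every arrow between $v_j$ and the frozen vertices points outward, so paths $f \to v_j \to v$ never occur and only paths $v \to v_j \to f$ need to be tracked; these require an arrow $v \to v_j$ in $F$. For $v = v_i \in F \setminus \{r\}$, I use that $F \setminus \{r\}$ is abundant acyclic and has the stated unique acyclic ordering, so $v_i \to v_j$ is equivalent to $v_i \prec v_j$; hence $v_j \prec v_i$ forces preservation. For $v = r$, the path requires $r \to v_j$, which is precisely $v_j \in F^+(r)$, so the hypothesis $v_j \in F^-(r)$ yields that $r$'s color is unchanged. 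The red case is symmetric: the paths $v \to v_j \to f$ vanish, leaving only $f \to v_j \to v$ which needs $v_j \to v$; this gives preservation for $v_i$ with $v_i \prec v_j$ and for $r$ when $v_j \in F^+(r)$.

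In both non-blue cases, mutation reverses every arrow incident to $v_j$, so the arrows from $v_j$ to frozen vertices flip in sign; uniform sign-coherence then guarantees that $v_j$ acquires a single consistent sign in $F'$, namely red in the green case and green in the red case.

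The whole proof is mechanical once one recognizes that the argument in Lemma \ref{lem-color-abundant acyclic} is really about length-two paths through the mutated vertex, a condition that cares only about adjacencies at $v_j$. The one genuinely new consideration is the point of return $r$, and its contribution reduces to checking a single arrow direction against $F^\pm(r)$, making it the tightest place to slip up but hardly an obstacle.
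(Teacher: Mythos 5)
Your proof is correct and matches the paper's intent exactly: the paper states this corollary without a separate proof, treating it as an immediate extension of the length-two-path argument in Lemma \ref{lem-color-abundant acyclic}, which is precisely the argument you reconstruct, with the only new ingredient being the check of the single arrow direction between $r$ and $v_j$ against $F^{\pm}(r)$.
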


In the same vein of thought, we can control the colors of vertices after a source mutation sequence in an abundant acyclic quiver. 

\begin{Lem} \label{lem-color-source-acyclic-mutation-order}
    Let $Q$ be an abundant acyclic and uniformly sign-coherent quiver, where $v_1 \prec v_2 \prec \dots \prec v_n$ is its unique acyclic ordering.
    Pick a vertex $v_j$ of $Q$ for $j \in Q_0$, and let $Q^{j} = \mu_{\bw_j}(Q)$ for $\bw_j = [v_1, v_2, \dots, v_j]$.
    If $v_j$ is red in $Q$, then $v_1, v_2, \dots, v_{j-1}$ are red vertices and $v_j$ is a green vertex in $Q^{j}$.
\end{Lem}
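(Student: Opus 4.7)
The plan is to prove the lemma by induction on $j$, with the base case $j=1$ following immediately from Lemma \ref{lem-color-abundant acyclic}: mutating the red vertex $v_1$ of $Q$ makes it green, and the set of required red vertices is empty. For the inductive step, assume the lemma for all $j' < j$ and fix $Q$ with $v_j$ red. I would establish the conclusion in three stages, corresponding to the three groups of vertices: $v_j$ itself; the middle vertices $v_2, \ldots, v_{j-1}$; and the first vertex $v_1$.

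First, to handle $v_j$, I would show by a subsidiary induction on $i$ that $v_j$ remains red in $Q^i$ for $0 \le i \le j-1$ and becomes green in $Q^j$. Since $v_i$ is the source of $Q^{i-1}$ by Lemma \ref{lem-source-acyclic-mutation-order}, we have $v_i \prec v_j$, hence $b^{(i-1)}_{v_j v_i} < 0$, and the $c$-vector mutation rule from Definition \ref{def-matrix-mutation} only adjusts $c_{v_j}$ at components where $c^{(i-1)}_{v_i,k} < 0$, contributing nonpositive amounts. Thus $c^{(i)}_{v_j,k} \le c^{(i-1)}_{v_j,k}$ for every frozen index $k$, and the strict negative entry inherited from $c^{(0)}_{v_j}$ propagates through the whole sequence. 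The final mutation $\mu_{v_j}$ then gives $c^{(j)}_{v_j} = -c^{(j-1)}_{v_j}$, a nonnegative vector with a strict positive entry, so $v_j$ is green in $Q^j$.

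Next, to handle $v_2, \ldots, v_{j-1}$, I would invoke the inductive hypothesis on $Q^1$. By Lemma \ref{lem-source-acyclic-mutation-order}, $Q^1$ is abundant acyclic with ordering $v_2 \prec \cdots \prec v_n \prec v_1$ and inherits uniform sign-coherence from $Q$. The previous stage shows $v_j$ is red in $Q^1$, and it occupies position $j-1$ in the new ordering. Applying the inductive hypothesis to $Q^1$ with the mutation sequence $[v_2, \ldots, v_j]$---which is the analogous source sequence of length $j-1$ in $Q^1$ and satisfies $\mu_{[v_2,\ldots,v_j]}(Q^1) = Q^j$---yields that $v_2, \ldots, v_{j-1}$ are red in $Q^j$, consistent with $v_j$ green.

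The main obstacle is handling $v_1$, which lies outside the scope of the inductive hypothesis applied to $Q^1$. Since $v_1$ is mutated only at step $1$, we have $c^{(1)}_{v_1} = -c^{(0)}_{v_1}$, and for each $i \in \{2, \ldots, j\}$ the ordering of $Q^{i-1}$ places $v_i$ before $v_1$, giving $b^{(i-1)}_{v_1 v_i} < 0$ and, by the same mutation rule, componentwise non-increase of $c_{v_1}$ through the rest of the sequence. The last mutation has the explicit form $c^{(j)}_{v_1} = c^{(j-1)}_{v_1} - |b^{(j-1)}_{v_1 v_j}|\, c^{(j)}_{v_j}$, where $c^{(j)}_{v_j}$ is nonnegative with a strict positive at any index $k_0$ chosen so that $c^{(0)}_{v_j, k_0} < 0$, and $|b^{(j-1)}_{v_1 v_j}| \ge 2$ by abundance of $Q^{j-1}$. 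When $v_1$ is green or blue in $Q$, monotonicity immediately yields $c^{(j-1)}_{v_1, k_0} \le 0$, hence $c^{(j)}_{v_1, k_0} < 0$; uniform sign-coherence of $Q^j$ then forces $c^{(j)}_{v_1} \le 0$, so $v_1$ is red. The hardest subcase is $v_1$ red in $Q$, where $c^{(1)}_{v_1, k_0} > 0$; there I would carefully track the accumulated multipliers $|b^{(i-1)}_{v_1 v_i}|$ and negative entries of $c^{(i-1)}_{v_i}$ across the intermediate mutations, exploiting abundance at each step, to verify that the subtraction in the final step still dominates $c^{(j-1)}_{v_1, k_0}$. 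Uniform sign-coherence in $Q^j$ then completes the argument.
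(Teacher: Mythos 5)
Your overall architecture is sound and genuinely different from the paper's: the paper makes a single forward pass through the source mutation sequence, tracking arrow multiplicities between the frozen vertices and \emph{all} of $v_1,\dots,v_{j-1}$ at once (the computations of Figures \ref{fig-subquiver-Q-i-1} and \ref{fig-subquiver-Q-j-1}), whereas you run an outer induction on $j$ that, via the cyclic re-ordering of Lemma \ref{lem-source-acyclic-mutation-order}, offloads $v_2,\dots,v_{j-1}$ to the inductive hypothesis applied to $Q^1$ and isolates $v_1$ as the only vertex needing a direct computation. Stages one and two are correct as written: the componentwise monotonicity of $c_{v_j}$ under mutation at sources preceding $v_j$ is the matrix-language version of the paper's observation that mutating at a source only adds arrows from frozen vertices toward the remaining mutable vertices, and the appeal to the inductive hypothesis on $Q^1$ with the length-$(j-1)$ source sequence is legitimate because $Q^1$ is again abundant acyclic and uniformly sign-coherent with $v_j$ in position $j-1$.

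The gap is in stage three, in the one subcase you defer: $v_1$ red in $Q$. You say you ``would carefully track the accumulated multipliers \dots to verify that the subtraction in the final step still dominates,'' but that verification \emph{is} the hardest step of the lemma---it is exactly the inequality $y<z$ that the paper extracts from Figure \ref{fig-subquiver-Q-j-1}---and you have not supplied it. It can be closed along the lines you indicate: fix $k_0$ with $c^{(j-1)}_{v_j,k_0}<0$ and suppose $c^{(j-1)}_{v_1,k_0}>0$ (otherwise your green/blue argument applies verbatim). Monotonicity after step $1$ gives $c^{(1)}_{v_1,k_0}\ge c^{(j-1)}_{v_1,k_0}>0$, hence $c^{(0)}_{v_1,k_0}=-c^{(1)}_{v_1,k_0}<0$, so the first mutation, at the red source $v_1$, fires on this component and sends $c^{(0)}_{v_j,k_0}\le 0$ to $c^{(1)}_{v_j,k_0}=c^{(0)}_{v_j,k_0}+|b_{v_jv_1}|\,c^{(0)}_{v_1,k_0}\le -2\,c^{(1)}_{v_1,k_0}$ by abundance. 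Since both $c_{v_j,k_0}$ and $c_{v_1,k_0}$ are non-increasing over steps $2,\dots,j-1$, this yields $|c^{(j-1)}_{v_j,k_0}|\ge 2\,c^{(j-1)}_{v_1,k_0}$, and the final mutation at $v_j$ gives $c^{(j)}_{v_1,k_0}\le c^{(j-1)}_{v_1,k_0}-2\,|c^{(j-1)}_{v_j,k_0}|<0$, after which uniform sign-coherence forces $v_1$ to be red in $Q^j$. Until some such computation is written out, the proof is incomplete precisely where the abundance hypothesis does its essential work.
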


\begin{proof}
    As $\bw_i$ is a source mutation sequence for all $i$ such that $1 \leq i < j \leq n$, we know that $v_i$ is a source in $Q^{i-1}$, where $Q^0 = Q$.
    If $v_i$ is green or blue in $Q^{i-1}$, Lemma \ref{lem-color-abundant acyclic} tells us that $v_i$ is red or blue respectively in $Q^i$ and every other vertex's color remains unchanged.
    If $v_i$ is red in $Q^{i-1}$, then mutation at $v_i$ adds arrows from frozen vertices to every mutable vertex, as $v_i$ is the source of $Q^{i-1}$.
    As such, all blue vertices in $Q^{i-1}$ become red vertices in $Q^{i}$.
    Let $w$ be a frozen vertex touching $v_i$ in $Q^{i-1}$.
    If $v_k$ is a red vertex of $Q^{i-1}$ for some $k \neq i$, then the subquiver given in Figure \ref{fig-subquiver-Q-i-1} exists in $Q^{i-1}$ for $a \geq 2$, $b \geq 1$, and $c \geq 0$.
    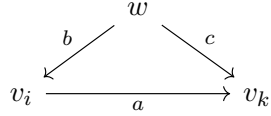
\begin{figure}[ht]
        \centering
        \begin{tikzcd}
             & w\\
            v_i & & v_k
            \arrow[from=1-2,to=2-1, "b"']
            \arrow[from=2-1,to=2-3, "a"']
            \arrow[from=1-2,to=2-3, "c"]
        \end{tikzcd}
        \caption{Subquiver of $Q^{i-1}$ Induced by $\{v_i, v_k, w\}$}
        \label{fig-subquiver-Q-i-1}
    \end{figure}
    
    After mutation at $i$, we have $c+ab$ arrows between $w$ and $v_k$, where $c+ab > b$ as $a \geq 2$.
    This shows that any red vertices in $Q^{i-1}$ other than $v_i$ remain red in $Q^i$ and that the number of arrows between a frozen vertex and red vertex in $Q^{i-1}$ increases after mutation at a red $v_i$.
    Applying this concept to $v_j$ shows that $v_j$ is red in $Q^{j-1}$.
    Additionally, if $v_i$ is green in $Q^{j-1}$ for any $i < j$ and $w$ is a frozen vertex touching $v_i$, then $v_i$ was a red vertex touching $w$ in $Q^{i-1}$.
    Our previous argument---substituting $v_j$ for $v_k$---then shows that the quiver given in Figure \ref{fig-subquiver-Q-j-1} is a subquiver of $Q^{j-1}$ where $x \geq 2$ and $y < z$.
    Thus mutating at $v_j$ forces $v_i$ to be red in $Q^j$.
    The result then follows.
    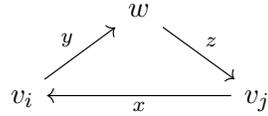
\begin{figure}[ht]
        \centering
        \begin{tikzcd}
            & w\\
            v_i & & v_j
            \arrow[from=2-1,to=1-2, "y"]
            \arrow[from=2-3,to=2-1, "x"]
            \arrow[from=1-2,to=2-3, "z"]
        \end{tikzcd}
        \caption{Subquiver of $Q^{j-1}$ Induced by $\{v_i, v_j, w\}$}
        \label{fig-subquiver-Q-j-1}
    \end{figure}
\end{proof}

If the sink of $Q$ is red, then we get the following corollary.

\begin{Cor} \label{cor-source-acyclic-almost-reddening}
    Let $Q$ be an abundant acyclic and uniformly sign-coherent quiver, where $v_1 \prec v_2 \prec \dots \prec v_n$ is its unique acyclic ordering.
    If $v_n$ is red in $Q$, then $\mu_{\bw_n}(Q)$ has $n-1$ red vertices and one green vertex $v_n$ for $\bw_n = [v_1, v_2, \dots, v_n]$.
\end{Cor}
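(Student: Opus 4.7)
The plan is to observe that this corollary is essentially an immediate specialization of Lemma \ref{lem-color-source-acyclic-mutation-order} to the endpoint $j = n$ of the acyclic ordering, and to simply verify that no additional work is needed beyond invoking that lemma.

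More concretely, I would apply Lemma \ref{lem-color-source-acyclic-mutation-order} with $j = n$, using the hypothesis that $v_n$ is red in $Q$. The conclusion of the lemma states that in $Q^j = \mu_{\bw_j}(Q)$, the vertices $v_1, v_2, \dots, v_{j-1}$ are red while $v_j$ itself is green. With $j = n$, this becomes: in $Q^n = \mu_{\bw_n}(Q)$, the vertices $v_1, \dots, v_{n-1}$ are red and $v_n$ is green. Since $Q$ has exactly $n$ mutable vertices, this accounts for every vertex of $Q^n$, giving precisely $n-1$ red vertices and one green vertex, as claimed.

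There is effectively no obstacle here; the real work was done in the proof of Lemma \ref{lem-color-source-acyclic-mutation-order}, where one had to carefully track both the color changes of the source being mutated at and the multiplicity growth (using the subquivers in Figures \ref{fig-subquiver-Q-i-1} and \ref{fig-subquiver-Q-j-1}) that keeps previously reddened vertices red under subsequent source mutations. One small sanity check worth recording in the write-up is that the acyclic ordering after $\bw_{n-1}$, namely $v_n \prec v_1 \prec \dots \prec v_{n-1}$ from Lemma \ref{lem-source-acyclic-mutation-order}, makes $v_n$ the unique source of $Q^{n-1}$, so $\bw_n$ is indeed a source mutation sequence and the lemma applies without modification.
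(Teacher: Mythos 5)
Your proposal matches the paper exactly: the corollary is stated there without proof as the immediate specialization of Lemma \ref{lem-color-source-acyclic-mutation-order} to $j = n$, which is precisely your argument. The extra sanity check that $v_n$ is the source of $Q^{n-1}$ is harmless but already subsumed by the lemma, whose hypotheses and proof cover $j = n$ directly.
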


Lemma \ref{lem-color-source-acyclic-mutation-order} combines with Lemma \ref{lem-source-fork-mutation-order} to control the color of $r$ in $F^j$.

\begin{Cor} \label{cor-color-source-fork-mutation}
    Let $F$ be a uniformly sign-coherent fork with point of return $r$, where $v_1 \prec v_2 \prec \dots \prec v_{n-1}$ is the unique acyclic ordering on $F \setminus \{r\}$.
    Additionally, let $F^j = \mu_{\bw_j}(F)$ for $\bw_j = [v_1, v_2, \dots, v_j]$ whenever $j \leq n-1$.
    \begin{itemize}
        \item If $r$ is red, then it can be any color in $F^j$.

        \item If $r$ is green, then it remains green in $F^j$.

        \item If $r$ is blue, then it is only blue in $F^j$ if no vertex $v_i$ is green in $F^{i-1}$ for all $i \leq j$.
        Otherwise, it will become green in $F^j$.
    \end{itemize}
    Furthermore, if $v_{n-1}$ is red in $F$, then it is both green in $F^{n-1}$ and the only non-red vertex in $F^{n-1} \setminus \{r\}$.
\end{Cor}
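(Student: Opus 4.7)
The plan is to do a single inductive analysis on $j$, tracking only the row $c_r$ of the $C$-matrix as each source mutation $\mu_{v_i}$ is performed. The key structural ingredient is supplied by Lemma \ref{lem-source-fork-mutation-order}: applied to $F^{j-1}$, it exhibits $r$ as the source of the abundant acyclic quiver $F^{j-1}\setminus\{v_{j-1}\}$. Abundance then forces multiple arrows $r\to v_j$ in $F^{j-1}$, so the exchange-matrix entry $b_{r,v_j}$ is strictly positive. This single fact is what drives every bullet.

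Plugging $b_{r,v_j}>0$ into the $c$-vector mutation rule of Definition \ref{def-matrix-mutation} yields a clean trichotomy for how $c_r$ changes under $\mu_{v_j}$. If $v_j$ is green in $F^{j-1}$ then $c_{v_j}\geq 0$ is nonzero, so $c_r$ receives a componentwise non-negative contribution, strictly positive in every coordinate where $c_{v_j}$ is positive; if $v_j$ is red or blue then each relevant product $b_{r,v_j}c_{v_j,k}$ is non-positive, so $c_r$ is left unchanged. From this trichotomy each bullet is immediate. For bullet two, a componentwise non-negative nonzero $c_r$ stays that way after non-negative additions, so a green $r$ remains green; iterate. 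For bullet three, $c_r=0$ persists precisely until the first step whose pivot $v_i$ is green in $F^{i-1}$, at which moment $c_r$ becomes non-negative and nonzero (hence green by Remark \ref{rmk-red-green-c-vector}), after which bullet two takes over. For bullet one, a componentwise non-positive $c_r$ receiving non-negative increments can, consistent with sign-coherence, land at non-positive (red), zero (blue), or non-negative (green); each outcome is realized by explicit small forks, which verifies the ``any color'' claim.

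For the ``furthermore,'' observe that $\bw_{n-1}$ avoids $r$, so $F^{n-1}\setminus\{r\}=\mu_{\bw_{n-1}}(F\setminus\{r\})$, and $F\setminus\{r\}$ is abundant acyclic with unique acyclic order $v_1\prec\cdots\prec v_{n-1}$. It inherits uniform sign-coherence from $F$, because any mutation sequence of $F\setminus\{r\}$ is also one of $F$ and the rows $c_{v_1},\ldots,c_{v_{n-1}}$ of the $C$-matrix are identical whether the vertex $r$ is present or deleted. Lemma \ref{lem-color-source-acyclic-mutation-order} applied to $F\setminus\{r\}$ with $v_{n-1}$ red then delivers $v_1,\ldots,v_{n-2}$ red and $v_{n-1}$ green in $F^{n-1}\setminus\{r\}$, and hence in $F^{n-1}$ itself.

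The hardest part is not really hard: the one thing I would need to be careful about is justifying the inheritance of uniform sign-coherence by $F\setminus\{r\}$, since the definition quantifies over every element of the mutation class; fortunately this is automatic because the mutation class of $F\setminus\{r\}$ embeds into that of $F$ via mutation sequences that never touch $r$. Once this is in place, the entire corollary is a tidy consequence of the single sign observation $b_{r,v_j}>0$ combined with the matrix mutation formula.
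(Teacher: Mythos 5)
Your proof is correct and follows essentially the route the paper intends for this (unproved) corollary: Lemma \ref{lem-source-fork-mutation-order} supplies the structural fact that $r$ stays the source of $F^{j}\setminus\{v_{j}\}$, hence $b_{r,v_{j+1}}>0$, the $c$-vector mutation rule then shows $c_r$ only ever receives non-negative increments (nonzero exactly at green pivots), and Lemma \ref{lem-color-source-acyclic-mutation-order} restricted to $F\setminus\{r\}$ --- with your correct observation that deletion of $r$ commutes with mutations avoiding $r$ --- gives the ``furthermore.'' The only soft spot is the first bullet, where ``can be any color'' would strictly require exhibiting realizing examples, but the paper treats that bullet as a non-claim and your sign analysis already shows all three outcomes are consistent.
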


From Corollary \ref{cor-color-source-fork-mutation}, if the sink of $F \setminus \{r\}$ is red, then we can readily mutate to a fork with at most 2 green vertices.
However, we do not need this restriction, as any color will do.

\begin{Lem} \label{lem-usc-fork-mutate-green-por}
    Let $F$ be a uniformly sign-coherent fork with point of return $r$, where $v_1 \prec v_2 \prec \dots \prec v_{n-1}$ is the unique acyclic ordering on $F \setminus \{r\}$.
    Then there is a mutation sequence of length at most $n+4$ ending in a fork with a green point of return, at most one other green vertex, and at least $n-2$ red vertices.
\end{Lem}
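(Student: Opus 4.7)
My plan is to reduce to the configuration treated by the last clause of Corollary~\ref{cor-color-source-fork-mutation}: a fork whose acyclic subquiver has a red sink. Once such a fork is reached, the corresponding source sequence of length $n-1$ automatically produces a fork with a green point of return, at most one other green vertex (the previous point of return, which may remain green), and at least $n-2$ red vertices.

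If $v_{n-1}$ is already red in $F$, this reduction is immediate and $\bw_{n-1}$ gives the result with length $n - 1 \leq n + 4$. Otherwise, I would begin with a preliminary mutation at $v_{n-1}$: by Lemma~\ref{lem-fork-mutation} this produces a new fork $F' = \mu_{v_{n-1}}(F)$ with point of return $v_{n-1}$, and by Corollary~\ref{cor-color-fork} the color of $v_{n-1}$ in $F'$ is red (if it was green in $F$) or blue (if it was blue in $F$). Lemma~\ref{lem-fork-mutate-order} determines the new acyclic ordering on $F' \setminus \{v_{n-1}\}$: it is $r \prec v_1 \prec \cdots \prec v_{n-2}$ when $v_{n-1} \in F^+(r)$ and $v_1 \prec \cdots \prec v_{n-2} \prec r$ when $v_{n-1} \in F^-(r)$, so the new sink is either $v_{n-2}$ or $r$.

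If that new sink is red in $F'$, applying the analog of $\bw_{n-1}$ in $F'$ (the source sequence mutating $F' \setminus \{v_{n-1}\}$ in acyclic order) finishes the argument with total length $n$. If it is not, I would insert up to a handful of further preliminary mutations aimed at flipping the current ``bad'' sink to red, each preserving the fork structure via Lemma~\ref{lem-fork-mutation} and each tracked via Corollary~\ref{cor-color-fork}. In every subcase the preprocessing terminates in at most five mutations before a red-sink fork is reached, after which the length-$(n-1)$ source sequence completes the argument inside the $n+4$ budget.

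The main obstacle is the combinatorial bookkeeping: under each preliminary mutation the point of return shifts, the acyclic ordering on the non-return part is reshuffled, and several vertex colors may flip. The key input that keeps this tractable is the invariance of ``later'' colors and (in most subcases) of $r$ under mutation at a non-return vertex, supplied by Corollary~\ref{cor-color-fork}, which lets each preprocessing mutation repair one color without undoing earlier progress.
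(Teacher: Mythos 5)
Your overall strategy is exactly the paper's: preprocess the fork until the sink of its acyclic part is red, then run the length-$(n-1)$ source sequence and invoke the last clause of Corollary~\ref{cor-color-source-fork-mutation}. The case where $v_{n-1}$ is already red and the first preliminary mutation at $v_{n-1}$ also match the paper (though note that since $v_{n-1}$ is the sink of $F\setminus\{r\}$ and forks have no sinks, $v_{n-1}\to r$ always holds, so only your second ordering $v_1\prec\dots\prec v_{n-2}\prec r$ can occur and the new sink is always $r$). The problem is that everything after that---``insert up to a handful of further preliminary mutations aimed at flipping the current bad sink to red \dots in every subcase the preprocessing terminates in at most five mutations''---is an assertion, not an argument, and that assertion is precisely the content of the lemma. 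The paper's proof consists of the explicit four-way case analysis on the colors of $v_{n-1}$ and $r$ (red/green/blue for each), tracking at each step which vertex becomes the new sink via Lemma~\ref{lem-fork-mutate-order} and what color it has via Corollary~\ref{cor-color-fork}, and verifying the mutation counts $n$, $n+2$, $n+3$, $n+4$.

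Moreover, your stated mechanism fails outright in one of the cases. If both $v_{n-1}$ and $r$ are blue, then mutating at a blue vertex changes no colors at all (first bullet of Corollary~\ref{cor-color-fork}), so no sequence of mutations at the ``bad'' sink and the point of return will ever produce a red sink; your loop of ``flip the current bad sink to red'' would cycle forever among blue vertices. The paper escapes this by mutating at some non-blue vertex $v_j$ elsewhere in the acyclic part, whose existence depends on the standing convention in Definition~\ref{def-usc-quiver} that a uniformly sign-coherent quiver is not trivially sign-coherent; after that mutation the point of return is red or green and one of $v_{n-1}$, $r$ is the new sink, reducing to an earlier case. Without identifying this case and this extra idea, the termination claim and the $n+4$ bound do not go through. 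You should carry out the full color case analysis rather than deferring it.
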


\begin{proof}
    Suppose first that $v_{n-1}$ is red.
    If $r$ is red or blue, mutate at $v_{n-1}$.
    This will produce a fork $F'$ with a green point of return $v_{n-1}$ and a red sink $r$ in $F' \setminus \{v_{n-1}\}$ by Lemma \ref{lem-fork-mutate-order}.
    If $r$ is instead green, we already have a fork with a green point of return and a red sink.
    We may then apply Corollary \ref{cor-color-source-fork-mutation} to get the desired result in at most $n$ mutations in both cases.

    Assume now that $v_{n-1}$ is green.
    If $r$ is red, then mutate at $v_{n-1}$.
    This will produce a fork $F'$ with a red point of return $v_{n-1}$ and a red sink $r$ in $F' \setminus \{v_{n-1}\}$.
    If $r$ is green or blue, then mutating $F$ along $[v_{n-1},r]$ will produce another fork $F^*$ with a red or blue point of return $r$ and a red sink $v_{n-1}$ in $F^* \setminus \{r\}$.
    Both of these reduce to our previous case, netting a max of $n+2$ total mutations.

    Assume that $v_{n-1}$ is blue.
    If $r$ is red or green, mutate at $v_{n-1}$.
    This will produce a fork $F'$ with a blue point of return $v_{n-1}$ and a red or green sink $r$ in $F' \setminus \{v_{n-1}\}$.
    Either situation reduces to one of our previous cases, giving us a max of $n+3$ total mutations.
    
    Finally, if $r$ and $v_{n-1}$ are both blue, there must be some other vertex $v_j \prec v_{n-1}$ in $F$ that is not blue by our assumption on uniformly sign-coherent quivers.
    Mutate at $v_j$.
    Let $F^*$ be the fork obtained after mutating at $v_j$.
    Then $F^*$ has a red or green point of return and one of $v_{n-1}$ or $r$ is the sink of $F^* \setminus \{v_j\}$ by Lemma \ref{lem-fork-mutate-order}.
    This gives us one of the three previous cases, so we have arrived at a max of $n+4$ total mutations.
\end{proof}

As we only need strictly sign-coherent, we have the following corollary. 

\begin{Cor} \label{cor-tsc-fork-mutate-green-por}
    Let $F$ be a strictly sign-coherent fork with point of return $r$, where $v_1 \prec v_2 \prec \dots \prec v_{n-1}$ is the unique acyclic ordering on $F \setminus \{r\}$.
    Then there is a mutation sequence of length at most $n+2$ ending in a fork with a green point of return, at most one other green vertex, and at least $n-2$ red vertices.
\end{Cor}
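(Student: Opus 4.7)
The plan is to simply specialize the proof of Lemma \ref{lem-usc-fork-mutate-green-por} to the strictly sign-coherent setting. Since a strictly sign-coherent fork has no blue vertices, each of the vertices $v_{n-1}$ and $r$ must be either red or green, which eliminates the last two cases of the lemma's case analysis (the ones that cost $n+3$ and $n+4$ mutations respectively). Only two cases remain: $v_{n-1}$ red and $v_{n-1}$ green.

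First I would handle the case where $v_{n-1}$ is red. If $r$ is red, mutating at $v_{n-1}$ produces by Lemma \ref{lem-fork-mutate-order} a fork $F'$ whose new point of return $v_{n-1}$ is green and whose sink in $F' \setminus \{v_{n-1}\}$ is $r$, which is still red. If $r$ is green, then $F$ itself already has a green point of return and a red sink $v_{n-1}$ in $F \setminus \{r\}$. In either case, applying Corollary \ref{cor-source-fork-mutation-order} followed by Corollary \ref{cor-color-source-fork-mutation} (equivalently, Corollary \ref{cor-source-acyclic-almost-reddening} applied to the abundant acyclic complement of the point of return) produces in at most $n-1$ further source mutations a fork whose point of return remains green and whose non-point-of-return vertices are all red except possibly for the new sink. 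This gives a total of at most $n$ mutations.

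Next I would handle the case where $v_{n-1}$ is green. If $r$ is red, mutating at $v_{n-1}$ yields a fork $F'$ with red point of return $v_{n-1}$ and red sink $r$ in $F' \setminus \{v_{n-1}\}$ (here I use Corollary \ref{cor-color-fork} to confirm that $r$ remains red, since $v_{n-1} \in F^-(r)$ or $v_{n-1} \in F^+(r)$ and the relevant color is preserved). If $r$ is green, mutating along $[v_{n-1},r]$ produces a fork $F^*$ with red point of return $r$ and red sink $v_{n-1}$ in $F^* \setminus \{r\}$. Either way we reduce to the previous case (with $r$ red and $v_{n-1}$ red) after one or two extra mutations, netting a total of at most $n+1$ or $n+2$ mutations respectively.

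Since no case requires mutation through a blue vertex, the bound $n+2$ is immediate. The only subtle point, which I would verify carefully, is that in the second case the composite mutation sequence $[v_{n-1}, r, v_1, \dots, v_{n-1}]$ (or its analogue) is well-defined as a mutation sequence on a fork and that Corollary \ref{cor-color-source-fork-mutation} applies with the correct updated acyclic ordering on the complement of the point of return, which is exactly the content of Corollary \ref{cor-source-r-fork-mutation-order}. With these tools in hand, there is no substantive obstacle beyond bookkeeping, as the hard structural work has already been done in the uniformly sign-coherent proof.
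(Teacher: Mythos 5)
Your proposal is correct and takes essentially the same route as the paper: the paper derives this corollary simply by observing that strict sign-coherence rules out blue vertices, so the two expensive cases ($n+3$ and $n+4$) in the proof of Lemma \ref{lem-usc-fork-mutate-green-por} never occur and the bound drops to $n+2$. Your re-derivation of the two surviving cases just restates that lemma's argument, so there is nothing substantively different here.
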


Given an arbitrary fork $F$ on $n$ vertices, Corollary \ref{cor-tsc-fork-mutate-green-por} shows $\ured(F) \geq n-2$.
All that remains is to chip away at one of remaining green vertices.

\begin{Lem} \label{lem-green-por-2-green}
    Let $F$ be a strictly sign-coherent fork with point of return $r$, where $v_1 \prec v_2 \prec \dots \prec v_{n-1}$ is the unique acyclic ordering on $F \setminus \{r\}$.
    If $r$ and $v_j$ are the only two green vertices of $F$, then $F$ has a mutation sequence---of length at most $j+2$---ending in a fork with at least $n-1$ red vertices.
\end{Lem}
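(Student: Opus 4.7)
My plan is to proceed by induction on $j$, using Corollary \ref{cor-color-fork}, the $c$-vector mutation formula from Definition \ref{def-matrix-mutation}, and strict sign-coherence to track color changes.

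Base case ($j=1$): Mutate $F$ at $v_1$. Since $v_1$ is green, it becomes red; by Corollary \ref{cor-color-fork}, the vertices $v_2, \ldots, v_{n-1}$ remain red. A direct check of the $c$-vector update for $r$ handles both arrow directions between $r$ and $v_1$: if $r \to v_1$, then $b_{rv_1}c_{v_1k} \geq 0$ and $c_{rk}$ only increases by a non-negative quantity, keeping $r$ green; if $v_1 \to r$, then $b_{rv_1}c_{v_1k} \leq 0$, so $c_r$ is unchanged. The resulting quiver is a fork (Lemma \ref{lem-fork-mutation}, or direct verification if $v_1$ is a source of $F$) with $n-1$ red vertices, in length $1 \leq 3 = j+2$.

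Inductive step: Mutate $F$ at $v_{j-1}$ (red). By Corollary \ref{cor-color-fork}, $v_1, \ldots, v_{j-2}$ stay red and $v_{j-1}$ becomes green. The $c$-vector update together with strict sign-coherence imply that each of $v_j$ and $r$ either stays green or flips entirely to red. I split into four sub-cases. \textbf{(i)} If both $v_j$ and $r$ flip to red, then $v_{j-1}$ is the unique green in $F' = \mu_{v_{j-1}}(F)$, giving $n-1$ reds in one mutation. \textbf{(ii)} If $r$ flips to red and $v_j$ stays green, then $F'$ is a fork with point of return $v_{j-1}$ (Lemma \ref{lem-fork-mutation}), and by Lemma \ref{lem-fork-mutate-order} the new acyclic ordering on $F' \setminus \{v_{j-1}\}$ places $v_j$ at position $j-1$; the inductive hypothesis then completes the process in $\leq (j-1)+2$ additional mutations, total $\leq j+2$. \textbf{(iii)} If $v_j$ flips to red and $r$ stays green, the two greens of $F'$ are $r$ and $v_{j-1}$; by Lemma \ref{lem-fork-mutate-order}, $r$ sits at position $1$ (if $r \to v_{j-1}$ in $F$) or position $n-1$ (if $v_{j-1} \to r$) in the new ordering. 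The position-$1$ subcase invokes induction with $j'=1$; the position-$(n-1)$ subcase is handled by one further mutation at $r$, where an explicit $c$-vector computation using that $r$ is the sink of $F' \setminus \{v_{j-1}\}$ shows $c_r$ flips to non-positive while $v_{j-1}$ stays green, giving $n-1$ reds in length $2$.

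The main obstacle is the fourth sub-case, where both $r$ and $v_j$ remain green after mutating at $v_{j-1}$, so the green count rises from two to three. Here I would abandon the initial mutation at $v_{j-1}$ and instead begin by mutating at $v_j$: by Corollary \ref{cor-color-fork}, $v_j$ becomes red and $v_{j+1},\ldots,v_{n-1}$ and $r$ all retain their colors; the only vertices that can possibly change are $v_1,\ldots,v_{j-1}$, and by strict sign-coherence each either stays red or flips entirely to green. A careful $c$-vector bookkeeping---leveraging the fork abundance $f_{ji}>f_{ir},\,f_{ji}>f_{rj}$ controlling the update magnitudes $|b_{v_iv_j}|c_{v_j}$---will show that the set $S \subseteq \{v_1,\ldots,v_{j-1}\}$ that flips to green is forced into a structure (most plausibly a single element, or a suffix corresponding to a smaller instance of the lemma) to which the inductive hypothesis applies within a length budget of $j+1$, yielding the total $j+2$.
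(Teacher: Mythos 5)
Your strategy---mutate once at $v_{j-1}$ and induct on $j$---is genuinely different from the paper's, and it has real gaps. The fatal one is your sub-case (iv). After mutating at the red $v_{j-1}$, the vectors $c_{v_j}$ and $c_r$ change only by adding $|b_{v_j,v_{j-1}}|\,c_{v_{j-1}}$ resp.\ $|b_{r,v_{j-1}}|\,c_{v_{j-1}}$, so if their entries are large both stay green; nothing rules this case out. Your fallback of mutating at $v_j$ instead makes things worse: the update $c_{v_i} \mapsto c_{v_i} + |b_{v_i,v_j}|\,c_{v_j}$ can turn an arbitrary subset of the red vertices $v_1,\dots,v_{j-1}$ green, leaving a fork with up to $j$ green vertices, which is no longer an instance of the lemma; ``careful bookkeeping will show'' is a hope, not an argument, and you supply no mechanism forcing that subset into a shape the induction can absorb. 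Second, sub-case (iii) with $v_{j-1}\to r$ is wrong as written: there $r$ is the \emph{sink} of $F'\setminus\{v_{j-1}\}$, so mutating at the green $r$ sends $c_{v_i}\mapsto c_{v_i}+|b_{v_i,r}|\,c_r$ for every $i\neq j-1$, adding a positive quantity to every red $c$-vector; those vertices need not stay red, so ``$n-1$ reds in length $2$'' does not follow. (Smaller issue: in sub-case (ii) the new position of $v_j$ is $j-1$ only when $v_{j-1}\to r$ in $F$; that orientation happens to be forced by $r$ turning red, but you never check it, and in the other orientation $v_j$ sits at position $j$ and both your length bound and the strict decrease needed for the induction fail.)

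The paper sidesteps all of this by first running the entire source mutation sequence $[v_1,\dots,v_{j-1}]$ on $F\setminus\{r\}$: Lemma \ref{lem-color-source-acyclic-mutation-order} and Corollary \ref{cor-color-source-fork-mutation} then confine the possibly-green vertices to $\{v_{j-1},v_j,r\}$, and Lemma \ref{lem-source-fork-mutation-order} makes $r$ a source of $F^{j-1}\setminus\{v_{j-1}\}$ and $v_{j-1}$ a sink of $F^{j-1}\setminus\{r\}$. That positional control is what guarantees the subsequent mutations at $r$, then possibly $v_{j-1}$ and $v_j$, touch only the frozen arrows of the intended vertices, with the one nontrivial color flip verified by the explicit three-vertex computation in Figures \ref{fig-lem-f-j-1}--\ref{fig-lem-f-tilde}. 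Some substitute for that control is exactly what your argument is missing.
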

    
\begin{proof}
    If $j = 1$, then mutate at $v_1$ to arrive at a fork with a single green vertex $r$.
    Suppose then that $j > 1$.
    Since $v_j$ was the first green vertex in the acyclic ordering of $F \setminus \{r\}$, we know that $v_{j-1}$ is red in $F$.
    By Corollaries \ref{cor-source-acyclic-almost-reddening} and \ref{cor-color-source-fork-mutation}, we know that the vertices $v_{j-1}$, $v_j$, and $r$ are the only possibly green vertices of $F^{j-1} = \mu_{\bw_{j-1}}(F)$ for $\bw_{j-1} = [v_1, \dots, v_{j-1}]$.
    As $r$ is green, mutating at $r$ produces a fork $F'$ with red point of return $r$ and green $v_{j-1}$, where $r \to v_{j-1}$ as a consequence of Corollary \ref{cor-source-r-fork-mutation-order}.
    Since $r$ was a source in $F^{j-1} \setminus \{v_{j-1}\}$, we know that $v_{j-1}$ and $v_{j}$ are the only two possibly green vertices in $F'$.
    If $v_{j}$ is red, then we have arrived at the general reddening sequence $\bw_{j-1}[r]$, as $v_{j-1}$ must still be green.
    If $v_{j}$ is green, then mutate at $v_{j-1}$.
    Let $\Tilde{F} = \mu_{v_{j-1}}(F')$.
    As the acyclic ordering of $F' \setminus \{r\}$ is $v_{j-1} \prec v_{j} \prec  \dots \prec v_{n-1} \prec v_1 \prec \dots \prec v_{j-2}$ by Corollary \ref{cor-source-r-fork-mutation-order}, this will make $v_{j}$ a green source of $\Tilde{F} \setminus \{r\}$.
    Mutation at $v_{j-1}$ will also swap $r$ from red to green and $v_{j-1}$ from green to red, meaning $r$ and $v_j$ are the only green vertices.

    \begin{figure}[ht]
        \centering
        \begin{tikzcd}
             & w\\
            r & & v_{j-1}
            \arrow[from=2-1,to=1-2, "b"]
            \arrow[from=2-3,to=2-1, "a"]
            \arrow[from=2-3,to=1-2, "c"']
        \end{tikzcd}    
        \caption{Subquiver of $F^{j-1}$ Induced by $\{r,v_{j-1},w\}$}
        \label{fig-lem-f-j-1}
    \end{figure}
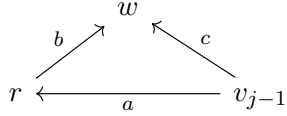
    
    To see why, observe the subquiver of $F^{j-1}$ given in Figure \ref{fig-lem-f-j-1} for some frozen vertex $w$ that touches $r$, $a \geq 2$, $b \geq 1$, and $c \geq 0$.
    Mutation at $r$ produces the subquiver of $F'$ given in Figure \ref{fig-lem-f-prime}.
    As $a > 1$, further mutation at $v_{j-1}$ gives the subquiver of $\Tilde{F}$ given in Figure \ref{fig-lem-f-tilde}, where $a^2b+ac-b > 0$.
    Hence, the vertex $r$ must be green and $v_{j-1}$ must be red in $\Tilde{F}$.

    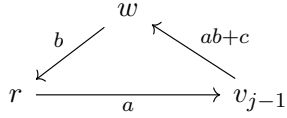
\begin{figure}[ht]
        \centering
            \begin{tikzcd}
             & w\\
            r & & v_{j-1}
            \arrow[from=1-2,to=2-1, "b"']
            \arrow[from=2-1,to=2-3, "a"']
            \arrow[from=2-3,to=1-2, "ab+c"']
        \end{tikzcd}
        \caption{Subquiver of $F'$ Induced by $\{r,v_{j-1},w\}$}
        \label{fig-lem-f-prime}
    \end{figure}

    Finally, as $v_j$ is a green source of $\Tilde{F} \setminus \{r\}$, we mutate at $v_{j}$ to arrive at a fork with a singular green vertex $r$.
    This completes the mutation sequence of length at most $j+2$.

    \begin{figure}[ht]
        \centering
        \begin{tikzcd}
             & w\\
            r & & v_{j-1}
            \arrow[from=2-1,to=1-2, "a^2b+ac-b"]
            \arrow[from=2-3,to=2-1, "a"]
            \arrow[from=1-2,to=2-3, "ab+c"]
        \end{tikzcd}    
        \caption{Subquiver of $\Tilde{F}$ Induced by $\{r,v_{j-1},w\}$}
        \label{fig-lem-f-tilde}
    \end{figure}
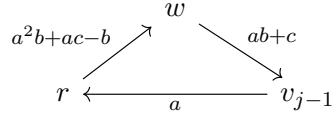
    
\end{proof}

We may then prove our main theorem.

\begin{Thm} \label{thm-tsc-fork-almost-reddening}
    Let $F$ be a strictly sign-coherent fork on $n$ mutable vertices.
    Then there exists a mutation sequence of length at most $2n+3$ ending in a fork with at least $n-1$ red vertices.
\end{Thm}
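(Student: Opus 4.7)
The plan is simply to concatenate the mutation sequences produced by Corollary \ref{cor-tsc-fork-mutate-green-por} and Lemma \ref{lem-green-por-2-green}, which have been set up precisely for this purpose. First I would apply Corollary \ref{cor-tsc-fork-mutate-green-por} to $F$, obtaining a mutation sequence $\bw_1$ of length at most $n+2$ such that $F' := \mu_{\bw_1}(F)$ is a fork whose point of return is green, with at most one further green vertex, and therefore at least $n-2$ red vertices. Since strict sign-coherence is preserved under mutation, $F'$ is again a strictly sign-coherent fork, so it satisfies the hypotheses of the subsequent lemma.

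Two cases then present themselves. If $F'$ has only one green vertex (its point of return), then it already has $n-1$ red vertices, and we are done in at most $n+2 \leq 2n+3$ mutations. Otherwise $F'$ has exactly two green vertices: its point of return $r'$ and some $v'_j$ occurring at position $j$ in the unique acyclic ordering of $F' \setminus \{r'\}$, where $1 \leq j \leq n-1$. Applying Lemma \ref{lem-green-por-2-green} to $F'$ produces a mutation sequence $\bw_2$ of length at most $j+2 \leq n+1$ such that $\mu_{\bw_2}(F')$ is a fork with at least $n-1$ red vertices.

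Concatenating the two gives a single mutation sequence of total length at most $(n+2) + (n+1) = 2n + 3$, and $\mu_{[\bw_1,\bw_2]}(F)$ is a fork with at least $n-1$ red vertices, as required. There is essentially no obstacle here, since the substantial combinatorial work has already been carried out in Corollary \ref{cor-tsc-fork-mutate-green-por} and Lemma \ref{lem-green-por-2-green}; the only minor point to verify is that $F'$ really does fit the hypotheses of the lemma, which is immediate because $F'$ is a strictly sign-coherent fork with exactly the prescribed two green vertices. The arithmetic $(n+2) + (n+1) = 2n+3$ is the reason the stated bound is tight in its present form.
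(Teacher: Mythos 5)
Your proposal is correct and follows the paper's own proof essentially verbatim: apply Corollary \ref{cor-tsc-fork-mutate-green-por} to reach a fork with a green point of return and at most one other green vertex in at most $n+2$ mutations, then apply Lemma \ref{lem-green-por-2-green} (with $j+2 \leq n+1$) to finish, for a total of at most $2n+3$. The additional observations you make---that strict sign-coherence is preserved under mutation and that the one-green-vertex case terminates early---are sound and only make the argument slightly more explicit than the paper's version.
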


\begin{proof}
    By Corollary \ref{cor-tsc-fork-mutate-green-por}, it takes at most $n+2$ mutations to arrive at a fork with a green point of return and at most one other green vertex.
    By Lemma \ref{lem-green-por-2-green}, once we have arrived at such a fork, it takes at most $n+1$ mutations to find a fork with $n-1$ red vertices and $1$ green vertex.
    As such, there exists a mutation sequence of length at most $2n+3$ ending in a fork with at least $n-1$ red vertices for every strictly sign-coherent fork on $n$ vertices.
\end{proof}

For any connected, mutation-infinite quiver $Q$, we may take the framing $\widehat{Q}$.
Then Theorem \ref{thm-connected-fork} tells us that we can mutate $\widehat{Q}$ to a strictly sign-coherent fork.
Our main result follows.

\begin{Cor} \label{cor-tsc-fork-ured-size}
    Every connected, mutation-infinite quiver $Q$ on $n$ vertices has $uRed(Q) \geq n-1$.
    Furthermore, if $Q$ is any quiver on $n$ vertices, then $\ured(Q) = n - c$, where $c$ is the number of connected components of $Q$ that do not admit reddening sequences.
\end{Cor}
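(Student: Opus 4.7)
The plan is to bootstrap from Theorem \ref{thm-tsc-fork-almost-reddening} for the first assertion, then combine this with the Bucher--Machacek mutation-finite result and additivity of $\ured$ over connected components to obtain the full formula.

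For the first claim, given a connected, mutation-infinite quiver $Q$ on $n$ vertices, I would pass to the framed quiver $\widehat{Q}$. By Theorem \ref{thm-connected-fork}, $Q$ is mutation-equivalent to a fork $F$; lifting this mutation sequence to $\widehat{Q}$ produces an ice quiver whose mutable part is $F$. By the sign-coherence theorem (Theorem \ref{thm-sign-coherence}), every quiver in the mutation class of $\widehat{Q}$ has only red and green mutable vertices, so this ice-quiver enlargement of $F$ is in fact strictly sign-coherent. Applying Theorem \ref{thm-tsc-fork-almost-reddening} then yields a mutation sequence ending in a configuration with at least $n-1$ red vertices, giving $\ured(Q) \geq n-1$.

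For the second claim, I would first observe additivity of $\ured$ under disjoint union: if $Q = Q_1 \sqcup \cdots \sqcup Q_k$ is the decomposition into connected components, then $\widehat{Q} = \widehat{Q_1} \sqcup \cdots \sqcup \widehat{Q_k}$, and since mutation at a vertex of $Q_i$ affects only $\widehat{Q_i}$, the optimal general reddening sequence on $\widehat{Q}$ can be applied componentwise, yielding $\ured(Q) = \sum_{i=1}^k \ured(Q_i)$. For each component, I split into cases. If $Q_i$ admits a reddening sequence, then $\ured(Q_i) = n_i$, where $n_i = |(Q_i)_0|$. If $Q_i$ does not admit a reddening sequence, then trivially $\ured(Q_i) \leq n_i - 1$, since reaching $n_i$ red vertices would by definition constitute a reddening sequence. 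The matching lower bound $\ured(Q_i) \geq n_i - 1$ comes from the mutation-finite case of Conjecture \ref{conj-b-m-connected}, already established by Bucher and Machacek, together with the first claim above for the mutation-infinite case. Hence $\ured(Q_i) = n_i - 1$ for each of the $c$ components without a reddening sequence, and summing gives
\[
\ured(Q) = \sum_{i:\, Q_i \text{ admits reddening}} n_i + \sum_{i:\, Q_i \text{ does not admit reddening}} (n_i - 1) = n - c.
\]

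There is no real obstacle here; the work has all been done in Theorem \ref{thm-tsc-fork-almost-reddening}. The only care needed is in justifying additivity of $\ured$ under disjoint union (which is immediate from the fact that mutation is local to a component and that red/green is determined purely by incident frozen arrows) and in confirming that the framing of a mutation-infinite quiver can indeed be mutated so as to realize a fork on the mutable part, which follows directly from Theorem \ref{thm-connected-fork} applied to the underlying quiver.
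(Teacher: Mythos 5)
Your proposal is correct and follows essentially the same route as the paper: frame $Q$, use Theorem \ref{thm-connected-fork} to mutate $\widehat{Q}$ to a strictly sign-coherent fork (strictness being guaranteed by Theorem \ref{thm-sign-coherence}), apply Theorem \ref{thm-tsc-fork-almost-reddening}, and then deduce the $n-c$ formula componentwise using the Bucher--Machacek mutation-finite result and the trivial upper bound $\ured(Q_i)\le n_i-1$ for components without a reddening sequence. The paper leaves the componentwise additivity and case split implicit, so your write-up is simply a more explicit version of the same argument.
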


\section{Controlling c-vectors} \label{sec-sign-coherent-forks}

While observing strictly sign-coherent forks, a pattern began to emerge in the number and direction of arrows between mutable and frozen vertices.
As such, we looked at the $c$-vectors arising from mutation sequences that never mutated at a fork's point of return.
Proposition \ref{prop-control-signs-fork} is the result, and it controls the signs and magnitudes of the $c$-vectors as long as certain conditions on the starting ice quiver are met.
A future work will make use of this result \cite{ervin_geometry_2024}.
However, we first demonstrate that a broad class of ice quivers satisfy the assumptions necessary for Proposition \ref{prop-control-signs-fork}.

\begin{Lem} \label{lem-framings-sign-coherent}
    Let $Q$ be any ice quiver with all red or all green vertices.
    If $v$ is a vertex of $Q$, then
    \begin{itemize}
        \item Every mutable vertex of $\mu_v(Q)$ is either red or green;
        
        \item If $b_{vj}^{[v]} c_j^{[v]} \geq 0$ and $b_{vj}^{[v]} \neq 0$ for some vertex $j \neq v$, then either $\sgn(c_j^{[v]}) = \sgn(c_v^{[v]})$ or $ \sgn(c_j^{[v]}) c_j^{[v]} \geq \sgn(c_v^{[v]}) c_v^{[v]}$;

        \item If $b_{vj}^{[v]} c_j^{[v]} \leq 0$ for some vertex $j \neq v$, then $\sgn(c_j^{[v]}) = -\sgn(c_v^{[v]})$;
        
        \item If $b_{ij}^{[v]} c_j^{[v]} \geq 0$ for distinct vertices $i$ and $j$ that are neither $v$, then $\sgn(c_i^{[v]}) = \sgn(c_j^{[v]})$.
    \end{itemize}
    In particular, the framing, coframing, and any triangular extension \cite{eric_bucher_banff_2021} of a mutable quiver with frozen vertices satisfy the above.
\end{Lem}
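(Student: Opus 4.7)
The plan is to introduce a unified sign parameter $\epsilon \in \{+1,-1\}$, with $\epsilon=+1$ in the all-green case and $\epsilon=-1$ in the all-red case, so that the hypothesis becomes $\epsilon c_i \geq 0$ componentwise and $c_i \neq 0$ for every mutable vertex $i$ of $Q$. All four bullets can then be handled simultaneously by a direct computation with Definition \ref{def-matrix-mutation}.

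The key computation is the following. Applying the mutation rule at $v$ gives $c_v^{[v]} = -c_v$, so $\sgn(c_v^{[v]}) = -\epsilon$ and $c_v^{[v]} \neq 0$. For $j \neq v$, a component $c_{j\ell}$ is updated exactly when $b_{jv} c_{v\ell} > 0$. Since $\epsilon c_{v\ell} \geq 0$, this occurs only when $\sgn(b_{jv}) = \epsilon$ and $c_{v\ell} \neq 0$; furthermore, when $c_{v\ell} = 0$ the formal term $|b_{jv}| c_{v\ell}$ vanishes, so we can uniformly write $c_j^{[v]} = c_j + |b_{jv}| c_v$ whenever $\sgn(b_{jv}) = \epsilon$, and $c_j^{[v]} = c_j$ otherwise. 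In either case $\epsilon c_j^{[v]} \geq \epsilon c_j \geq 0$ componentwise and $c_j^{[v]} \neq 0$, so $j$ keeps its original color. This establishes the first bullet, and the fourth is immediate because every mutable vertex of $\mu_v(Q)$ other than $v$ has sign $\epsilon$.

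For the third bullet, the hypothesis $b_{vj}^{[v]} c_j^{[v]} \leq 0$ together with $\sgn(c_j^{[v]}) = \epsilon$ forces $\sgn(b_{vj}^{[v]}) \in \{-\epsilon, 0\}$, and one reads off $\sgn(c_j^{[v]}) = \epsilon = -\sgn(c_v^{[v]})$. For the second bullet, the hypotheses $b_{vj}^{[v]} c_j^{[v]} \geq 0$ and $b_{vj}^{[v]} \neq 0$ force $\sgn(b_{vj}^{[v]}) = \epsilon$, hence $\sgn(b_{jv}) = \epsilon$ in $Q$ and $|b_{jv}| \geq 1$; then $c_j^{[v]} = c_j + |b_{jv}| c_v$ yields $\epsilon c_j^{[v]} \geq \epsilon c_v$ componentwise, which translates to $\sgn(c_j^{[v]}) c_j^{[v]} \geq \sgn(c_v^{[v]}) c_v^{[v]}$ since $\sgn(c_j^{[v]}) c_j^{[v]} = \epsilon c_j^{[v]}$ and $\sgn(c_v^{[v]}) c_v^{[v]} = (-\epsilon)(-c_v) = \epsilon c_v$. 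The first alternative $\sgn(c_j^{[v]}) = \sgn(c_v^{[v]})$ is ruled out because the two signs are $\epsilon$ and $-\epsilon$.

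The ``in particular'' clause then follows by inspection: every mutable vertex of $\widehat{Q}$ is green by construction, every mutable vertex of $\widecheck{Q}$ is red, and a triangular extension in the sense of \cite{eric_bucher_banff_2021} is built so that its mutable part is uniformly colored relative to the frozen part. I do not expect a serious obstacle; the one delicate point, which the uniform-sign hypothesis is designed to handle, is ensuring that the positive and negative parts of $c_v$ never both contribute to a single row update, so that the clean formula $c_j^{[v]} = c_j + |b_{jv}| c_v$ remains valid and the sign bookkeeping collapses to a single case.
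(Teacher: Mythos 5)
Your proof is correct and follows essentially the same route as the paper: both arguments observe that mutation at $v$ preserves the common sign of every $c_j$ for $j\neq v$ and flips $c_v$, which disposes of the first, third, and fourth bullets at once, and both reduce the second bullet to the componentwise identity $c_j^{[v]} = c_j + |b_{jv}|c_v$ to compare magnitudes. Your $\epsilon$-bookkeeping is just a uniform packaging of the paper's case-free computation; no substantive difference.
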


\begin{proof}
    First, every mutable vertex other than $v$ of $\mu_v(Q)$ retains their color, as mutating at $v$ can only add arrows between frozen and mutable vertices in the same direction as they were already pointing.
    As mutating at $v$ only reverses the direction of the arrows at $v$, it is now the opposite color of what it was originally, proving the first result.
    In terms of $c$-vectors, we have that $c_v^{[v]} = -c_v$ and $\sgn(c_i^{[v]}) = \sgn(c_i)$ for all vertices $i \neq v$.
    
    Thus $\sgn(c_v^{[v]}) = -\sgn(c_j^{[v]})$ and $\sgn(c_i^{[v]}) = \sgn(c_j^{[v]})$ for all distinct vertices $i$ and $j$ with $v \notin \{i,j\}$, demonstrating the latter two points.
    Finally, if $b_{vj}^{[v]} c_j^{[v]} \geq 0$ and $b_{vj}^{[v]} \neq 0$ for some vertex $j \neq v$, then $b_{jv}^{[v]} c_v^{[v]} \geq 0$ as $\sgn(c_v^{[v]}) = -\sgn(c_j^{[v]})$.
    Thus $b_{jv} c_v \geq 0$ and 
    $$c_j^{[v]} = c_j + |b_{jv}|c_v.$$
    As $c_j$ and $c_v$ share the same sign and $b_{jv} \neq 0$, we naturally have that  $ \sgn(c_j^{[v]}) c_j^{[v]} \geq \sgn(c_v^{[v]}) c_v^{[v]}$, completing our proof.
\end{proof}

As many ice quivers satisfy the assumptions of Proposition \ref{prop-control-signs-fork}, we may now proceed with the inductive step of the proof.

\begin{Prop} \label{prop-control-signs-fork}
    Let $Q$ be any ice quiver such that the mutable subquiver of $\mu_v(Q)$ is a fork with point of return $v$ for some vertex $v$.
    Further, let $\bw$ be any reduced mutation sequence beginning with $v$ and suppose that
    \begin{itemize}
        \item Every mutable vertex of $\mu_v(Q)$ is either red or green;
        
        \item If $b_{vj}^{[v]} c_j^{[v]} \geq 0$ for some vertex $j \neq v$, then either $\sgn(c_j^{[v]}) = \sgn(c_v^{[v]})$ or $ \sgn(c_j^{[v]}) c_j^{[v]} \geq \sgn(c_v^{[v]}) c_v^{[v]}$;

        \item If $b_{vj}^{[v]} c_j^{[v]} \leq 0$ for some vertex $j \neq v$, then $\sgn(c_j^{[v]}) = -\sgn(c_v^{[v]})$;
        
        \item If $b_{ij}^{[v]} c_j^{[v]} \geq 0$ for distinct vertices $i$ and $j$ that are neither $v$, then $\sgn(c_i^{[v]}) = \sgn(c_j^{[v]})$.
    \end{itemize}
    Then, if $r$ is the last mutation of $\bw$, we have the following:
    \begin{itemize}
        \item Every mutable vertex of $\mu_\bw(Q)$ is either red or green;
        
        \item If $b_{rj}^\bw c_j^\bw \geq 0$ for some vertex $j \neq r$, then either $\sgn(c_j^{\bw}) = \sgn(c_r^\bw)$ or $ \sgn(c_j^\bw) c_j^\bw \geq \sgn(c_r^\bw) c_r^\bw$;

        \item If $b_{rj}^\bw c_j^\bw \leq 0$ for some vertex $j \neq r$, then $\sgn(c_j^{\bw}) = -\sgn(c_r^\bw)$;
        
        \item If $b_{ij}^\bw c_j^\bw \geq 0$ for distinct vertices $i$ and $j$ that are neither $r$, then $\sgn(c_i^{\bw}) = \sgn(c_j^\bw)$.
    \end{itemize}
\end{Prop}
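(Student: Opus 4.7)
My plan is to prove the proposition by induction on the length $\ell = |\bw|$. For $\ell = 1$, we have $\bw = [v]$ and $r = v$, so the four conclusions reduce verbatim to the four hypotheses. For the inductive step, write $\bw = \bw'[r]$ where $\bw'$ is a reduced mutation sequence of length $\ell - 1$ beginning with $v$ and ending at some vertex $r' \neq r$ (reducedness of $\bw$ forces $r' \neq r$); the inductive hypothesis applied to $\bw'$ then provides the four conditions for $\mu_{\bw'}(Q)$ with $r'$ playing the role of the last mutation. A fork has no source or sink --- otherwise it would itself be acyclic, contradicting the fork definition --- so Lemma \ref{lem-fork-mutation} applied iteratively along $\bw'$ shows that the mutable subquiver of $\mu_{\bw'}(Q)$ is a fork with point of return $r'$, and a final application of the same lemma at $r$ confirms that the mutable subquiver of $\mu_\bw(Q)$ is a fork with point of return $r$.

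Let $\epsilon = \sgn(c_r^{\bw'})$, which is well-defined because condition (1) holds for $\bw'$. The $c$-vector mutation rule can be rewritten as $c_r^\bw = -c_r^{\bw'}$ and, for $i \neq r$, $c_i^\bw = c_i^{\bw'} + [\epsilon\, b_{ir}^{\bw'}]_+ c_r^{\bw'}$, where $[x]_+ = \max(x,0)$. The first equation shows $c_r^\bw$ is sign-coherent and nonzero with sign $-\epsilon$. For $i \neq r$, either $\epsilon\, b_{ir}^{\bw'} \leq 0$, so $c_i^\bw = c_i^{\bw'}$ inherits its sign from $\bw'$, or $\epsilon\, b_{ir}^{\bw'} > 0$, in which case we must analyze the sum of two sign-coherent nonzero vectors. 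When $i \neq r'$, condition (4) for $\bw'$ applied to the pair $(i, r)$ forces $\sgn(c_i^{\bw'}) = \sgn(c_r^{\bw'}) = \epsilon$, and the sum is sign-coherent with sign $\epsilon$. When $i = r'$, condition (2) for $\bw'$ applied to the pair $(r', r)$ gives either sign agreement (same conclusion) or the componentwise magnitude bound $\epsilon\, c_r^{\bw'} \geq -\sgn(c_{r'}^{\bw'})\, c_{r'}^{\bw'}$; combined with $|b_{r'r}^{\bw'}| \geq 2$ (since the mutable subquiver of $\mu_{\bw'}(Q)$ is abundant), the update $c_{r'}^\bw = c_{r'}^{\bw'} + |b_{r'r}^{\bw'}|\, c_r^{\bw'}$ is sign-coherent with sign $\epsilon$ and strictly nonzero. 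This establishes condition (1) for $\bw$.

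Conditions (2), (3), and (4) at $\bw$ are then verified by a parallel case analysis: express the mutated entries $b_{rj}^\bw$ and $b_{ij}^\bw$ via the standard $B$-matrix mutation formula, combine with the updated $c$-vectors above, and invoke skew-symmetry $b_{ij}^{\bw'} = -b_{ji}^{\bw'}$ to swap indices where needed. The hypotheses (2), (3), (4) for $\bw'$, together with the fact that $r$ is the point of return of the fork $\mu_\bw(Q)$ --- which supplies the strict multiplicity inequalities $f_{ji} > f_{ir}$ and $f_{ji} > f_{rj}$ at $r$ --- deliver the required sign and magnitude statements. I expect the main obstacle to be the magnitude clause of condition (2) at $\bw$: when sign disagreement between $c_j^\bw$ and $c_r^\bw$ persists after mutation, one must upgrade the inductive magnitude bound from $\bw'$ to $\bw$, and this is precisely where the strict fork inequalities at the new point of return $r$ provide the extra slack needed to push the estimate through.
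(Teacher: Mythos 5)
Your induction scheme, base case, and decomposition $\bw=\bw'[r]$ match the paper's proof, and your verification of the first conclusion (every mutable vertex red or green) is essentially the paper's argument: the cases $i\neq r'$ and $i=r'$ are resolved by conditions (4) and (2) of the inductive hypothesis respectively, with abundance ($|b_{r'r}^{\bw'}|\geq 2$) supplying the componentwise domination needed when $c_{r'}^{\bw'}$ and $c_r^{\bw'}$ have opposite signs. Two small slips there: the magnitude clause of condition (2) should read $\sgn(c_r^{\bw'})c_r^{\bw'}\geq \sgn(c_{r'}^{\bw'})c_{r'}^{\bw'}$ rather than the inequality you wrote, which is vacuously true and carries no magnitude information; and ``a fork with a source or sink would be acyclic'' is false for general quivers --- ruling out sources and sinks away from the point of return requires the fork inequalities $f_{ji}>f_{ir}$ and $f_{ji}>f_{rj}$.

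The genuine gap is that conclusions (2)--(4) for $\bw$ are never proved; they are the content of the strengthened inductive hypothesis, the induction does not close without them, and they occupy most of the paper's argument. Conclusions (2) and (3) do follow by a case analysis parallel to yours (tracking the sign of $b_{rj}^{\bw}c_j^{\bw}$ via $b_{rj}^{\bw}=-b_{rj}^{\bw'}$), but conclusion (4) does not. For $i,j\notin\{r,r'\}$ the paper needs the structural fact that $\sgn(b_{ij}^{\bw})=\sgn(b_{ij}^{\bw'})$, which holds because $\mu_{\bw'}(Q)$ minus its point of return $r'$ is abundant acyclic. For the remaining case $r'\in\{i,j\}$ the paper runs a contradiction argument whose engine is that mutating the fork $\mu_{\bw'}(Q)$ at $r\neq r'$ forces $\sgn(b_{r'j}^{\bw})=\sgn(b_{r'r}^{\bw'})=\sgn(b_{rr'}^{\bw})$ for all $j\notin\{r,r'\}$, i.e.\ $r'$ becomes the source or the sink of $\mu_{\bw}(Q)\setminus\{r\}$ as in Lemma \ref{lem-fork-mutate-order}; you do not identify this input. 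Finally, your diagnosis of where the difficulty lies is off target: the magnitude clause of conclusion (2) is closed by abundance and a short telescoping estimate, not by the strict inequalities $f_{ji}>f_{ir}$ at the new point of return, which the paper never invokes for this purpose.
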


\begin{proof}
    Take for our inductive hypothesis that the four conclusions hold for a reduced mutation sequence $\bw'$ beginning with $v$ of length $m \geq 1$.
    Our base case was managed by our assumptions on the mutation sequence $[v]$ and $\mu_v(Q)$.
    If $\bw = \bw'[k]$ for some mutation sequence $\bw'$ of length $m$ ending in $r$ and some vertex $k \neq r$, then we will show that the same conclusions hold for $\bw$ and $\mu_\bw(Q)$.

    Assume for now that $i \notin \{r,k\}$.
    By the inductive hypothesis, we know $b_{ik}^{\bw'} c_k^{\bw'} \geq 0$ implies $\sgn(c_i^{\bw'}) = \sgn(c_k^{\bw'})$.
    Then $c_i^{\bw} = c_i^{\bw'} + |b_{ik}^{\bw'}| c_k^{\bw'}$, which gives us
    $$\sgn(c_i^{\bw}) = \sgn(c_i^{\bw'}) = \sgn(c_k^{\bw'}) = -\sgn(c_k^{\bw}).$$
    The magnitude of each entry of $c_i^{\bw}$ is then greater than or equal to the magnitude of each entry of $c_k^{\bw} = -c_k^{\bw'}$ as $|b_{ik}^{\bw'}| > 0$ from $\mu_{\bw'}(Q)$ being a fork.
    Thus $b_{ki}^{\bw} c_i^{\bw} \geq 0$ and $\sgn(c_i^\bw) c_i^\bw \geq \sgn(c_k^\bw) c_k^\bw$, as desired.

    If instead $b_{ik}^{\bw'} c_k^{\bw'} \leq 0$, we know that $c_i^{\bw} = c_i^{\bw'}$.
    As such, if $b_{ki}^{\bw} c_i^{\bw} \geq 0$, then $b_{ki}^{\bw'} c_i^{\bw'} \leq 0$.
    This forces 
    $$\sgn(c_i^{\bw}) = \sgn(c_i^{\bw'}) = -\sgn(c_k^{\bw'}) = \sgn(c_k^{\bw}).$$
    If $b_{ki}^{\bw} c_i^{\bw} \leq 0$, then $b_{ki}^{\bw'} c_i^{\bw'} \geq 0$.
    Thus the inductive hypothesis gives
    $$\sgn(c_i^{\bw}) = \sgn(c_i^{\bw'}) = \sgn(c_k^{\bw'}) = -\sgn(c_k^{\bw}).$$
    Both of the above situations are the anticipated outcomes.
    Additionally, they combine to show that $\sgn(c_i^\bw) > 0$ or $\sgn(c_i^\bw) < 0$ for all $i \neq r$.
    This is equivalent to every vertex other than possibly $r$ being red or green.

    Now we tackle the case where $i = r$.
    If $b_{rk}^{\bw'} c_k^{\bw'} \leq 0$, then $\sgn(c_k^{\bw'}) = - \sgn(c_r^{\bw'})$ by the inductive hypothesis.
    As such, we have that $c_r^{\bw} = c_r^{\bw'}$, $\sgn(c_r^{\bw}) = \sgn(c_k^{\bw})$, and $b_{kr}^{\bw} c_r^{\bw} \geq 0$, as desired.
    If $b_{rk}^{\bw'} c_k^{\bw'} \geq 0$, then either $\sgn(c_k^{\bw'}) =  \sgn(c_r^{\bw'})$ or $\sgn(c_k^{\bw'})c_k^{\bw'} \geq  \sgn(c_r^{\bw'})c_r^{\bw'}$ by the inductive hypothesis.
    In the first case, we know that $c_r^{\bw} = c_r^{\bw'} + |b_{rk}^{\bw'}| c_k^{\bw'}$ has the same sign as $c_k^{\bw'}$ and thus the opposite sign of $c_k^{\bw}$.
    This forces $b_{kr}^{\bw} c_r^{\bw} \geq 0$ and $\sgn(c_r^{\bw}) c_r^{\bw} \geq \sgn(c_k^{\bw}) c_k^{\bw}$.
    
    When $\sgn(c_k^{\bw'}) =  -\sgn(c_r^{\bw'})$ and $\sgn(c_k^{\bw'})c_k^{\bw'} \geq  \sgn(c_r^{\bw'})c_r^{\bw'}$, this again forces $c_r^{\bw} = c_r^{\bw'} + |b_{rk}^{\bw'}| c_k^{\bw'}$ to have the opposite sign of $c_k^{\bw}$.
    Furthermore, our assumption gives 
    $$\sgn(c_r^{\bw}) c_r^{\bw} = \sgn(c_k^{\bw'}) [c_r^{\bw'} + |b_{rk}^{\bw'}| c_k^{\bw'}]$$
    $$= \sgn(c_k^{\bw'}) c_r^{\bw'} + \sgn(c_k^{\bw'})c_k^{\bw'} + \sgn(c_k^{\bw'})(|b_{rk}^{\bw'}|-1)c_k^{\bw'}$$
    $$\geq \sgn(c_k^{\bw'}) c_r^{\bw'} + \sgn(c_r^{\bw'})c_r^{\bw'} + \sgn(c_k^{\bw'})(|b_{rk}^{\bw'}|-1)c_k^{\bw'}$$
    $$= \sgn(c_k^{\bw'})(|b_{rk}^{\bw'}|-1)c_k^{\bw'}$$
    $$\geq  \sgn(c_k^{\bw'}) c_k^{\bw'}$$
    $$\geq  \sgn(c_k^{\bw}) c_k^{\bw}$$
    with the last inequality coming from the abundant property of forks.
    This completes inductive step for the first two sign conditions, and it shows that every vertex is either red or green.
    We only need to demonstrate the last sign condition.
    Assume then that $b_{ij}^{\bw} c_j^{\bw} \geq 0$ for distinct vertices $i$ and $j$ that are neither $k$ or $r$.
    The case where $r \in \{i,j\}$ will be treated as a special case at the end of the proof.

    First, note that $\sgn(b_{ij}^{\bw}) = \sgn(b_{ij}^{\bw'})$ if $r \notin \{i,j\}$, as $B^{\bw'} \setminus \{r\}$ is an acyclic quiver and we mutated at $k \notin\{i,j, r\}$.
    Thus, if $b_{jk}^{\bw'} c_k^{\bw'} \leq 0$, we have that $b_{ij}^{\bw'} c_j^{\bw'} \geq 0$ as $c_j^{\bw} = c_j^{\bw'}$.
    This forces $\sgn(c_i^{\bw'}) = \sgn(c_j^{\bw'})$ by the inductive hypothesis.
    If $\sgn(c_k^{\bw'}) = \sgn(c_j^{\bw'})$, then 
    $$\sgn(c_i^{\bw}) = \sgn(c_i^{\bw'}) = \sgn(c_j^{\bw'}) = \sgn(c_j^{\bw}).$$ 
    If $\sgn(c_k^{\bw'}) = -\sgn(c_j^{\bw'})$, then $b_{ik}^{\bw'} c_k^{\bw'} \leq 0$.
    Otherwise, the inductive hypothesis would imply that $\sgn(c_i^{\bw'}) = \sgn(c_k^{\bw'}) = -\sgn(c_j^{\bw'})$, a contradiction.
    Thus
    $$\sgn(c_i^{\bw}) = \sgn(c_i^{\bw'}) = \sgn(c_j^{\bw'}) = \sgn(c_j^{\bw}).$$
    In both cases, we have $b_{ij}^{\bw} c_j^{\bw} \geq 0$ and $\sgn(c_i^{\bw}) = \sgn(c_j^{\bw})$.
    
    Now, if $b_{jk}^{\bw'} c_k^{\bw'} \geq 0$, the inductive hypothesis gives that $\sgn(c_j^{\bw'}) = \sgn(c_k^{\bw'})$.
    Since $\sgn(c_j^{\bw}) = \sgn(c_j^{\bw'})$ and $\sgn(b_{ij}^{\bw}) = \sgn(b_{ij}^{\bw'})$, we again have $b_{ij}^{\bw'} c_j^{\bw'} \geq 0$.
    The inductive hypothesis then gives that $\sgn(c_i^{\bw'}) = \sgn(c_j^{\bw'})$.
    Thus
    $$\sgn(c_i^{\bw}) = \sgn(c_i^{\bw'}) = \sgn(c_k^{\bw'}) = \sgn(c_j^{\bw'}) = \sgn(c_j^{\bw}),$$
    completing the last sign condition for the case where $r \notin \{i,j\}$.

    Finally, if $b_{ij}^{\bw} c_j^{\bw} \geq 0$ and $\sgn(c_i^{\bw}) = -\sgn(c_j^{\bw})$ for distinct vertices $i$ and $j$ that are neither $k$ with $r \in \{i,j\}$, we will soon arrive at a contradiction.
    As this forces $b_{ji}^{\bw} c_i^{\bw} \geq 0$, we can assume without loss of generality that $i = r$, i.e., $b_{rj}^\bw c_j^\bw \geq 0$ and $b_{jr}^\bw c_r^\bw \geq 0$.
    Then 
    $$-\sgn(b_{jr}^{\bw}) = \sgn(b_{rj}^{\bw}) = \sgn(b_{rk}^{\bw'}) = \sgn(b_{kr}^{\bw})$$ 
    as mutation at $k$ will force $b_{rv}^\bw$ to have the sign of $b_{rk}^{\bw'}$ for all vertices $v \notin \{k,r\}$.
    Thus $b_{kr}^\bw c_r^\bw \leq 0$, forcing $\sgn(c_r^\bw) = - \sgn(c_k^\bw)$ as shown previously.
    Then both $b_{rk}^{\bw} c_k^{\bw} \leq 0$ and $b_{rk}^{\bw'} c_k^{\bw'} \leq 0$, forcing $c_r^{\bw} = c_r^{\bw'}$.
    By the inductive hypothesis, we know that $\sgn(c_r^{\bw'}) = -\sgn(c_k^{\bw'})$.
    However, this creates a contradiction as 
    $$\sgn(c_r^\bw) = \sgn(c_r^{\bw'}) = -\sgn(c_k^{\bw'}) = \sgn(c_k^{\bw}).$$
    Therefore, we cannot have both $b_{rj}^{\bw} c_j^{\bw} \geq 0$ and $\sgn(c_r^{\bw}) = - \sgn(c_j^{\bw})$ for some vertex $j \notin \{k,r\}$, completing the inductive step of our proof.
    Thus the four conclusions hold for any reduced mutation sequence $\bw$ beginning with $v$.
\end{proof}

We then have an immediate corollary on the case where $b_{ir}^{\bw} c_r^\bw \geq 0$.

\begin{Cor} \label{cor-signs-por}
    Let $Q$ be any ice quiver such that the mutable subquiver of $\mu_v(Q)$ is a fork with point of return $v$ for some vertex $v$ satisfying the assumptions of Proposition \ref{prop-control-signs-fork}.
    If $r$ is the last mutation of $\bw$---hence the point of return of $Q^\bw$---and $b_{ir}^{\bw} c_r^\bw \geq 0$ for some vertex $i \neq r$, then $\sgn(c_i^\bw) = -\sgn(c_r^\bw)$ and $\sgn(c_i)^{\bw} c_i^{\bw} \geq \sgn(c_r^\bw) c_r^\bw$. 
\end{Cor}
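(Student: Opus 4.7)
The plan is to derive this corollary as a short case analysis based on Proposition~\ref{prop-control-signs-fork}, leveraging the skew-symmetry of the exchange matrix. First I would record two preliminary facts. By iterating Lemma~\ref{lem-fork-mutation} along $\bw$ starting from the fork $\mu_v(Q)$, the mutable subquiver of $\mu_\bw(Q)$ is a fork with point of return $r$, so in particular $b_{ir}^\bw \neq 0$; and Proposition~\ref{prop-control-signs-fork} ensures every mutable vertex of $\mu_\bw(Q)$ is red or green, so $c_r^\bw$ and $c_i^\bw$ both have well-defined nonzero signs. The hypothesis $b_{ir}^\bw c_r^\bw \geq 0$ then translates to $\sgn(b_{ir}^\bw) = \sgn(c_r^\bw)$, and skew-symmetry of $B^\bw$ gives $\sgn(b_{ri}^\bw) = -\sgn(c_r^\bw)$.

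Next I would examine the sign of $b_{ri}^\bw c_i^\bw$. If it were nonpositive, the third bullet of Proposition~\ref{prop-control-signs-fork} (applied with $j=i$) would force $\sgn(c_i^\bw) = -\sgn(c_r^\bw) = \sgn(b_{ri}^\bw)$, making $b_{ri}^\bw c_i^\bw$ component-wise nonnegative with some strictly positive entries (since both factors are nonzero); this contradicts $b_{ri}^\bw c_i^\bw \leq 0$. Hence $b_{ri}^\bw c_i^\bw \geq 0$, and the second bullet of Proposition~\ref{prop-control-signs-fork} (again with $j=i$) forces either $\sgn(c_i^\bw) = \sgn(c_r^\bw)$ or $\sgn(c_i^\bw) c_i^\bw \geq \sgn(c_r^\bw) c_r^\bw$.

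The first alternative combined with $b_{ri}^\bw c_i^\bw \geq 0$ would yield $\sgn(b_{ri}^\bw) = \sgn(c_i^\bw) = \sgn(c_r^\bw)$, contradicting $\sgn(b_{ri}^\bw) = -\sgn(c_r^\bw)$ established above. Thus only the second alternative can hold, giving $\sgn(c_i^\bw) c_i^\bw \geq \sgn(c_r^\bw) c_r^\bw$; and since every mutable vertex is red or green (so the signs are nonzero) while $\sgn(c_i^\bw) \neq \sgn(c_r^\bw)$, we simultaneously conclude $\sgn(c_i^\bw) = -\sgn(c_r^\bw)$. The main obstacle here is bookkeeping---carefully tracking which bullet of Proposition~\ref{prop-control-signs-fork} applies in each case, and being explicit about the nonvanishing of $b_{ir}^\bw$ to convert inequalities into sign equalities---but no machinery beyond that proposition and Lemma~\ref{lem-fork-mutation} is needed.
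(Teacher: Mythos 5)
Your argument is correct and is exactly the ``immediate'' deduction the paper intends (the paper offers no written proof for this corollary): convert $b_{ir}^{\bw}c_r^{\bw}\geq 0$ into $\sgn(b_{ri}^{\bw})=-\sgn(c_r^{\bw})$ via skew-symmetry and abundance, rule out the case $b_{ri}^{\bw}c_i^{\bw}\leq 0$ and the alternative $\sgn(c_i^{\bw})=\sgn(c_r^{\bw})$ using the second and third conclusions of Proposition~\ref{prop-control-signs-fork}, and conclude. Your preliminary observations (that $\mu_{\bw}(Q)$ is a fork with point of return $r$ by iterating Lemma~\ref{lem-fork-mutation}, so $b_{ir}^{\bw}\neq 0$, and that strict sign-coherence makes all signs nonzero) are precisely the details needed to make the case split exhaustive, so nothing is missing.
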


Additionally, we can easily generalize Proposition \ref{prop-control-signs-fork} to abundant acyclic quivers.

\begin{Lem} \label{lem-control-signs-abundant acyclic}
    Let $Q$ be any ice quiver such that the mutable subquiver of $\mu_v(Q)$ is either a fork with point of return $v$ or an abundant acyclic quiver with $v$ a sink or source.
    Further, let $\bw$ be any reduced sequence beginning with $v$.
    Then, if the assumptions of Proposition \ref{prop-control-signs-fork} hold, so do the conclusions.
\end{Lem}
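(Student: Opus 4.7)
The fork case is precisely Proposition~\ref{prop-control-signs-fork}, so the work lies in the abundant acyclic case. Assume $\mu_v(Q)$ is abundant acyclic with $v$ a source (the sink case is symmetric by reversing all arrows). The plan is to rerun the inductive argument of Proposition~\ref{prop-control-signs-fork} verbatim, after verifying that its two structural inputs remain available along the mutation sequence.

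I would first establish a structural claim: for every reduced prefix $\bw^*$ of $\bw$ beginning with $v$, the quiver $\mu_{\bw^*}(Q)$ is either abundant acyclic, or else a fork whose point of return coincides with the last mutation of $\bw^*$. Starting from the abundant acyclic $\mu_v(Q)$, mutating at a source or sink produces another abundant acyclic quiver (no arrows are cancelled, so abundance is preserved), while mutating at any other vertex produces a fork by Lemma~\ref{lem-fork-mutation}, with that vertex as the point of return. Once we have entered fork territory, the next mutation cannot land on the current point of return by reducedness of $\bw$, and iterating Lemma~\ref{lem-fork-mutation} keeps us in a fork with the point of return updated to the most recent mutation.

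From this structural claim, the two needed facts about $\mu_{\bw'}(Q)$---where $\bw'$ is the inductive-hypothesis prefix ending in some vertex $r$---follow immediately: abundance gives $|b_{ij}^{\bw'}| \geq 2$ for all distinct mutable $i,j$, and the subquiver $B^{\bw'} \setminus \{r\}$ is acyclic. Indeed, if $\mu_{\bw'}(Q)$ is abundant acyclic then $B^{\bw'}$ itself is acyclic, and if it is a fork then $r$ is by construction its point of return. These are precisely the two ingredients used in the inductive step of Proposition~\ref{prop-control-signs-fork}---the first to establish magnitude bounds on $c$-vectors, the second to guarantee $\sgn(b_{ij}^\bw) = \sgn(b_{ij}^{\bw'})$ under mutation at $k \notin \{i,j,r\}$---so the algebraic sign and magnitude manipulations of that proof transfer without change. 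The main obstacle is really just the structural bookkeeping ensuring we always remain in the abundant acyclic or fork class with the correct point of return; with that in hand, the inductive machinery of Proposition~\ref{prop-control-signs-fork} finishes the proof.
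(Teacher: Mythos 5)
Your structural claim is correct and is indeed the implicit backbone of the paper's argument: along a reduced sequence starting from an abundant acyclic $\mu_v(Q)$ one stays abundant acyclic (with the last-mutated vertex a source or sink) until a non-source, non-sink mutation occurs, after which Lemma~\ref{lem-fork-mutation} and reducedness keep you in a fork whose point of return is the last mutation. The gap is in your assertion that abundance and acyclicity of $B^{\bw'}\setminus\{r\}$ are ``precisely the two ingredients'' needed to rerun the induction. They are not. The final contradiction in the proof of Proposition~\ref{prop-control-signs-fork} (the case $r\in\{i,j\}$ of the last sign condition) hinges on the chain $-\sgn(b_{jr}^{\bw})=\sgn(b_{rj}^{\bw})=\sgn(b_{rk}^{\bw'})=\sgn(b_{kr}^{\bw})$, i.e.\ on the entire row of $r$ in $B^{\bw}$ being sign-coherent away from $k$ with the sign of the old entry $b_{rk}^{\bw'}$. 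That is a statement about the arrows \emph{at} $r$, about which acyclicity of $B^{\bw'}\setminus\{r\}$ says nothing. In the fork case it comes from Lemma~\ref{lem-fork-mutate-order} (the old point of return $r$ becomes the source or the sink of $\mu_{\bw}(Q)\setminus\{k\}$ according to the direction of the arrow between $r$ and $k$); in the new abundant acyclic case it must instead be extracted from the fact that $r$ is a source or sink of $\mu_{\bw'}(Q)$, so that all $b_{rj}^{\bw'}$ already share the sign of $b_{rk}^{\bw'}$ and mutation at $k$ preserves those signs for $j\neq k$.

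This verification is exactly what the paper's proof of the lemma consists of: it reduces at once to ``we need to show that mutation at $k$ will force $b_{rv}^{\bw}$ to have the sign of $b_{rk}^{\bw'}$ for all $v\notin\{k,r\}$'' and then checks this in each combination of $\mu_{\bw'}(Q)$ and $\mu_{\bw}(Q)$ being a fork or abundant acyclic. Your structural claim already contains the fact you need ($r$ is a source or sink whenever $\mu_{\bw'}(Q)$ is abundant acyclic), so the gap is fillable in a few lines, but as written your proof declares ``transfers without change'' exactly at the one step that requires new work.
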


\begin{proof}
    As abundant acyclic quivers are mutation-abundant, we only need to check the last part of the proof, where we arrived at our contradiction.
    To do so, we need to show that mutation at $k$ will force $b_{rv}^\bw$ to have the sign of $b_{rk}^{\bw'}$ for all vertices $v \notin \{k,r\}$.
    If $\mu_\bw(Q)$ is abundant acyclic, then both $r$ and $k$ were source or sink mutations.
    As $\bw$ is a reduced mutation sequence, to do otherwise would force $\mu_\bw(Q)$ to be a fork.
    Thus, the vertex $r$ was a sink or source in $\mu_{\bw'}(Q)$.
    As such,
    $$-\sgn(b_{jr}^{\bw}) = \sgn(b_{rj}^{\bw}) = \sgn(b_{rj}^{\bw'}) = \sgn(b_{rk}^{\bw'}) = \sgn(b_{kr}^{\bw}),$$ 
    and the proof goes through as before.

    Assume then that $\mu_\bw(Q)$ is a fork.
    In the case that $\mu_{\bw'}(Q)$ is a fork, then the proof works without modification.
    In the case that $\mu_{\bw'}(Q)$ is abundant acyclic, then $r$ is either a source or a sink.
    The above argument then finishes our generalization.
\end{proof}

If we reduce to the case where we have three mutable vertices, we can further generalize to mutation-cyclic quivers on three vertices.

\begin{Lem} \label{lem-control-signs-3-cyclic}
    Let $Q$ be any ice quiver such that the mutable subquiver of $\mu_v(Q)$ is a mutation-cyclic quiver on three vertices.
    Further, let $v$ be any vertex and $\bw$ be any reduced mutation sequence beginning with $v$.
    Then, if the assumptions of Proposition \ref{prop-control-signs-fork} hold, so do the conclusions.
\end{Lem}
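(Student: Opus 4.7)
The plan is to adapt the inductive proof of Proposition \ref{prop-control-signs-fork}, following the same template that already worked in Lemma \ref{lem-control-signs-abundant acyclic}. Inspecting the proof of Proposition \ref{prop-control-signs-fork}, the hypothesis that $\mu_{\bw'}(Q)$ is a fork enters in only two places: the abundance inequality $|b_{rk}^{\bw'}| \geq 2$, used for the final chain of estimates ending in $\sgn(c_k^{\bw'})(|b_{rk}^{\bw'}|-1)c_k^{\bw'} \geq \sgn(c_k^{\bw'}) c_k^{\bw'}$ (together with the milder use $|b_{ik}^{\bw'}| \geq 1$); and the structural identity $\sgn(b_{rv}^{\bw}) = \sgn(b_{rk}^{\bw'})$ for all vertices $v \notin \{k,r\}$, which drives the concluding contradiction. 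Once both facts are verified for the current setting, the four inductive conclusions carry over verbatim.

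For the first ingredient, I would invoke the standard description of three-vertex mutation-cyclic quivers as precisely the oriented three-cycles with all multiplicities at least $2$, a property that is preserved under every mutation within the mutation class. In particular, every intermediate $\mu_{\bw'}(Q)$ remains an abundant three-cycle, so the bound $|b_{ij}^{\bw'}| \geq 2$ that drove the chain of estimates in Proposition \ref{prop-control-signs-fork} holds at every step.

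For the second ingredient, the three-vertex assumption is what makes the argument work cleanly: when there are only three mutable vertices $r$, $k$, $j$, the quantifier ``for all $v \notin \{k,r\}$'' collapses to the single case $v = j$. I would verify $\sgn(b_{rj}^{\bw}) = \sgn(b_{rk}^{\bw'})$ by a direct three-vertex mutation calculation in each of the two cyclic orientations. For instance, if $\mu_{\bw'}(Q)$ is the cycle $r \to k \to j \to r$ with multiplicities $(a,b,c)$, mutation at $k$ yields an arrow $r \to j$ of multiplicity $ab - c$, which is strictly positive because $\mu_\bw(Q)$ must again be cyclic and so cannot be acyclic; thus $\sgn(b_{rj}^\bw) = +1 = \sgn(b_{rk}^{\bw'})$, and the reverse orientation gives the symmetric conclusion with the signs negated.

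With these two bookkeeping facts in place, the induction of Proposition \ref{prop-control-signs-fork} repeats without change, using the hypotheses of the present lemma as the base case at $\bw = [v]$. I expect the sign verification to be the only real technical obstacle, but its scope is tiny: a pair of explicit three-vertex mutation computations together with a one-line appeal to mutation-cyclicity to rule out the acyclic outcome. Everything else is already assembled in the proofs of Proposition \ref{prop-control-signs-fork} and Lemma \ref{lem-control-signs-abundant acyclic}.
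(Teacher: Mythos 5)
Your proposal is correct and follows essentially the same route as the paper: the paper likewise reduces everything to (i) mutation-cyclic quivers on three vertices being mutation-abundant and (ii) the sign identity $\sgn(b_{rj}^{\bw}) = \sgn(b_{rk}^{\bw'})$, which it obtains from the fact that mutating a three-cycle at a vertex yields a cycle of opposite orientation. Your explicit computation of the multiplicity $ab-c>0$ is just a hands-on verification of that same orientation-reversal fact, so the two arguments coincide.
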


\begin{proof}
    As mutation-cyclic quivers on three vertices are mutation-abundant, we need only show that mutation at $k$ will force $b_{rj}^\bw$ to have the sign of $b_{rk}^{\bw'}$ for all vertices $j \notin \{k,r\}$.
    Since our quiver is a cycle and mutating produces a cycle with opposite orientation, we have 
    $$-\sgn(b_{jr}^{\bw}) =  \sgn(b_{rj}^{\bw}) = -\sgn(b_{rj}^{\bw'}) = \sgn(b_{rk}^{\bw'}) = -\sgn(b_{rk}^{\bw}) =  \sgn(b_{kr}^{\bw}),$$
    and the proof follows as before.
\end{proof}

Notably, any framing or coframing of a fork $F$ will have $\mu_v(F)$ satisfy the constraints of Proposition \ref{prop-control-signs-fork} for all vertices $v$ that are not the point of return by Lemma \ref{lem-framings-sign-coherent}.
If instead we look at a framing or coframing of $Q$ for arbitrary abundant acyclic quivers or for mutation-cyclic quivers on three vertices, we see that $\mu_v(Q)$ satisfies the constraints of Proposition \ref{prop-control-signs-fork} for all vertices $v$.
We can then use this information to prove that the latter two types of quivers are strictly sign-coherent. 

\begin{Cor} \label{cor-sign-coherence-abundant acyclic}
    Let $Q$ be any ice quiver such that the mutable subquiver of $Q$ is an abundant acyclic quiver.
    If $\mu_v(Q)$ satisfies the assumptions of Lemma \ref{lem-control-signs-abundant acyclic} for all mutable vertices $v$, then $Q$ is strictly sign-coherent.
    In particular, any framing or coframing of an abundant acyclic quiver is strictly sign-coherent.
\end{Cor}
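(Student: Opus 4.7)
The plan is to derive strict sign-coherence of $Q$ as a direct application of Lemma \ref{lem-control-signs-abundant acyclic}, with a brief check of the initial quiver and, for the ``in particular'' clause, a verification of the hypotheses for framings and coframings.

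First I would unpack the definition: to show that $Q$ is strictly sign-coherent we must verify that for every reduced mutation sequence $\bw$, every mutable vertex of $\mu_\bw(Q)$ is red or green. For any nonempty $\bw$, write $\bw = [v] \cdot \bw''$ where $v$ is the first mutation, and apply Lemma \ref{lem-control-signs-abundant acyclic} directly: the corollary's hypothesis is exactly that $\mu_v(Q)$ meets that lemma's assumptions, so its first conclusion yields precisely that every mutable vertex of $\mu_\bw(Q)$ is red or green. This disposes of every nonempty mutation sequence at once.

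The empty mutation sequence, i.e.\ $Q$ itself, needs a separate but easy observation. For the ``in particular'' clause this is immediate: if $Q = \widehat{R}$ is the framing of an abundant acyclic quiver $R$, every mutable vertex $i$ carries only the outgoing arrow $i \to i'$ to a frozen vertex and is thus green by Definition \ref{def-red-green-blue}, and the coframing case is symmetric with every mutable vertex red.

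To close the ``in particular'' clause I would verify the standing hypothesis on $\mu_v(Q)$ for every mutable vertex $v$. Since $Q$ is monochromatic (all green or all red), Lemma \ref{lem-framings-sign-coherent} supplies the four sign conditions for $\mu_v(Q)$ directly. The structural requirement on the mutable subquiver of $\mu_v(Q)$ is handled by Lemma \ref{lem-fork-mutation}: if $v$ is a source or sink of $R$ then $\mu_v(R)$ remains abundant acyclic with $v$ exchanged between source and sink, and otherwise $\mu_v(R)$ becomes a fork with point of return $v$. The main---and quite mild---obstacle is simply remembering to treat the initial quiver separately from the mutation-sequence argument, which the framing/coframing construction makes trivial.
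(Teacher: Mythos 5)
Your proposal is correct and follows essentially the same route the paper intends: the paper leaves this corollary without an explicit proof, but the paragraph preceding it indicates exactly your argument — verify the hypotheses of Lemma \ref{lem-control-signs-abundant acyclic} for every $\mu_v(Q)$ via Lemmata \ref{lem-framings-sign-coherent} and \ref{lem-fork-mutation}, then apply that lemma's first conclusion to every nonempty reduced mutation sequence. Your explicit handling of the empty sequence and of the source/sink versus fork dichotomy is a welcome bit of extra care that the paper glosses over.
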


\begin{Cor} \label{cor-sign-coherence-mutation-cyclic-3}
    Let $Q$ be any ice quiver such that the mutable subquiver of $Q$ is a mutation-cyclic quiver on three vertices.
    If $\mu_v(Q)$ satisfies the assumptions of Lemma \ref{lem-control-signs-abundant acyclic} for all mutable vertices $v$, then $Q$ is strictly sign-coherent.
    In particular, any framing or coframing of a mutation-cyclic quiver on three vertices is strictly sign-coherent.
\end{Cor}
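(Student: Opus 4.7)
The plan is to deduce the corollary directly from Lemma \ref{lem-control-signs-3-cyclic}, since the first of the four sign-coherence conclusions of Proposition \ref{prop-control-signs-fork}---and hence of Lemma \ref{lem-control-signs-3-cyclic}---is precisely the statement that every mutable vertex is red or green. The only real task is to check that the hypothesis of Lemma \ref{lem-control-signs-3-cyclic} applies to every reduced mutation sequence out of $Q$, not merely to those beginning with a single chosen $v$.

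First I would take an arbitrary reduced mutation sequence $\bw$ applied to $Q$ and split into cases. If $\bw$ is empty, I would pick any mutable vertex $v$; by hypothesis $\mu_v(Q)$ has every mutable vertex red or green, and since the $c$-vectors of $Q$ differ from those of $\mu_v(Q)$ only by negation of $c_v$, $Q$ itself has no blue vertex either. If instead $\bw = [v, \bw']$ for some mutable $v$, the hypothesis guarantees that $\mu_v(Q)$ satisfies the four sign-coherence conditions of Proposition \ref{prop-control-signs-fork}, so Lemma \ref{lem-control-signs-3-cyclic} applies to the reduced mutation sequence $\bw$ beginning with $v$, and its first conclusion delivers that every mutable vertex of $\mu_\bw(Q)$ is red or green. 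Since $\bw$ was arbitrary, $Q$ is strictly sign-coherent in the sense of Definition \ref{def-usc-quiver}.

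For the "in particular" statement, I would invoke Lemma \ref{lem-framings-sign-coherent} to verify the general hypothesis for any framing or coframing $Q$ of a mutation-cyclic three-vertex quiver: that lemma provides all four sign-coherence conditions on $\mu_v(Q)$ for every mutable $v$, and the mutable subquiver of such a framing or coframing is again mutation-cyclic on three vertices because mutation of the ambient ice quiver agrees with mutation of its mutable part. This supplies the hypothesis of the corollary, and the first part of the argument then closes out the proof.

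There is no substantial obstacle remaining at this stage---the combinatorial work has already been carried out in Proposition \ref{prop-control-signs-fork}, Lemma \ref{lem-control-signs-3-cyclic}, and Lemma \ref{lem-framings-sign-coherent}, and the corollary is essentially a packaging result. The only minor subtlety I would flag is the empty-sequence case, which needs the short observation that sign-coherence of $\mu_v(Q)$ pulls back to sign-coherence of $Q$ via one mutation; this is easy to overlook if one reads Lemma \ref{lem-control-signs-3-cyclic} purely as a forward propagation result.
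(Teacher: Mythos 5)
Your proposal follows exactly the route the paper intends: the corollary is stated without a separate proof, and the paragraph preceding it makes clear that it is meant as a direct packaging of Lemma \ref{lem-framings-sign-coherent} (to verify the hypotheses for framings and coframings, whose mutable part is abundant so the extra condition $b_{vj}^{[v]}\neq 0$ is automatic) together with Lemma \ref{lem-control-signs-3-cyclic}, whose first conclusion gives that every mutable vertex is red or green after any nonempty reduced mutation sequence. Your case split and your handling of the nonempty case are correct and match this.

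One supporting claim in your empty-sequence case is false as stated: the $c$-vectors of $Q$ do \emph{not} differ from those of $\mu_v(Q)$ only by negation of $c_v$. By Definition \ref{def-matrix-mutation}, mutation at $v$ can also change the rows $c_j$ with $j \neq v$; for instance, for a framing one gets $c_{jv}^{[v]} = |b_{jv}|$ whenever $b_{jv} > 0$, so row $j$ genuinely moves. The conclusion you want still holds, but the correct justification uses the hypothesis at every vertex rather than a single $v$: for each mutable vertex $j$, apply the assumption with $v = j$; the row indexed by the mutated vertex \emph{is} transformed by pure negation, so $c_j = -c_j^{[j]} \neq 0$ has a definite sign, and hence $j$ is red or green in $Q$ itself. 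With that one-line patch your argument is complete and coincides with the paper's.
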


Finally, both the Markov quiver---the cycle on three vertices with two arrows between each pair of vertices---and arbitrary mutation-cyclic quivers on three vertices do not admit a maximal green sequence \cite{muller_existence_2016}.
It was additionally shown that the Markov quiver does not admit a reddening sequence \cite{ladkani_cluster_2013}.
Further, it is well-known that arbitrary mutation-cyclic quivers on three vertices do not admit a reddening sequence; however, we are unaware of where this was first proven in the literature.
We give an elementary proof of this fact using Lemma \ref{lem-control-signs-3-cyclic}.

\begin{Lem} \label{lem-red-seq-3-cycle}
    Let $Q$ be any mutation-cyclic quiver on three vertices.
    Then the framing of $Q$ is not mutation-equivalent to any quiver with all red vertices.
    As such, any mutation-cyclic $Q$ on three vertices does not admit a reddening sequence.
\end{Lem}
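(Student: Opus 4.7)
The plan is to apply Lemma \ref{lem-control-signs-3-cyclic} directly to the framing $\widehat{Q}$ and extract a contradiction from the assumption that some mutation sequence produces an all-red quiver. First I would dispatch the empty mutation sequence: $\widehat{Q}$ has all mutable vertices green by construction, so it is not all red. For any nonempty reduced mutation sequence $\bw$, write $\bw = [v, \ldots, r]$. Lemma \ref{lem-framings-sign-coherent} verifies the four hypotheses of Lemma \ref{lem-control-signs-3-cyclic} for $\mu_v(\widehat{Q})$, so the conclusions of that lemma apply to $\mu_\bw(\widehat{Q})$; in particular every mutable vertex of $\mu_\bw(\widehat{Q})$ is red or green and the relevant sign relations hold.

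The decisive step exploits the 3-cycle structure. Since $Q$ is mutation-cyclic on three vertices, $\mu_\bw(Q)$ is again a 3-cycle, and by mutation-abundance (as already used in the proof of Lemma \ref{lem-control-signs-3-cyclic}) there is a unique mutable vertex $i \neq r$ with an arrow $r \to i$, and $b_{ri}^\bw \geq 2 > 0$. Now suppose every mutable vertex of $\mu_\bw(\widehat{Q})$ is red. Then $c_i^\bw$ has every entry $\leq 0$, so $b_{ri}^\bw c_i^\bw \leq 0$ componentwise. The third conclusion of Lemma \ref{lem-control-signs-3-cyclic} (applied with $j = i$) forces $\sgn(c_i^\bw) = -\sgn(c_r^\bw)$. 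But red coloring gives $\sgn(c_i^\bw) = \sgn(c_r^\bw) = -1$, contradicting $\sgn(c_i^\bw) = -\sgn(c_r^\bw) = +1$. The second sentence of the lemma is then immediate: no mutation sequence can take $\widehat{Q}$ to an all-red quiver, so $Q$ admits no reddening sequence.

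The hardest part is really only bookkeeping, not a genuine mathematical obstacle. I need to confirm that the 3-cycle structure is preserved under arbitrary mutation of a mutation-cyclic 3-quiver (so that the out-neighbour $i$ of $r$ always exists with $b_{ri}^\bw$ of the expected sign and of size at least $2$), and that the componentwise inequality $b_{ri}^\bw c_i^\bw \leq 0$ matches exactly the hypothesis used in the sign-control conclusion. Both are packaged inside Lemma \ref{lem-control-signs-3-cyclic} and its proof, so once the reduction to "last mutated vertex together with its out-neighbour in the 3-cycle" is made, the contradiction drops out in one line. Nothing more delicate is needed.
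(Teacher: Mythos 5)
Your proposal is correct and follows essentially the same route as the paper: run the sign-control conclusions of Lemma \ref{lem-control-signs-3-cyclic} (with hypotheses supplied by Lemma \ref{lem-framings-sign-coherent}), locate the out-neighbour $i$ of the last-mutated vertex $r$ in the 3-cycle so that $b_{ri}^{\bw} c_i^{\bw} \leq 0$ under the all-red assumption, and derive the contradiction $\sgn(c_i^{\bw}) = -\sgn(c_r^{\bw})$. The only differences are cosmetic --- you dispatch the empty sequence explicitly and single out the out-neighbour, where the paper states both sign inequalities and uses one of them.
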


\begin{proof}
    We argue by contradiction.
    Assume that we may mutate the framing of $Q$ to an ice quiver $P$ where all the vertices are red.
    Let $r$ be the last mutation of the sequence $\bw$ that takes $\widehat{Q}$ to $P$.
    Since $Q$ is mutation-cyclic and every vertex of $P$ is red, we know that $b_{ri}^\bw c_i^\bw \geq 0$ and $b_{rj}^\bw c_j^\bw \leq 0$ for mutable vertices $i$ and $j$ in $P$.
    However, this would imply that $\sgn(c_r^\bw) = -\sgn(c_j^\bw)$ by Lemma \ref{lem-control-signs-3-cyclic}, contradicting all vertices of $P$ being red.
    Therefore, the framing $\widehat{Q}$ cannot be mutated to any ice quiver with all red vertices, forcing $Q$ to not admit a reddening sequence.
\end{proof}

\begin{Rmk} \label{rmk-red-seq-3-acyclic}
    This argument can also be applied to abundant acyclic quivers on an arbitrary number of vertices.
    Since the vertex $r$ can be a source or sink, the contradiction reached in Lemma \ref{lem-red-seq-3-cycle} can be avoided.
    As such, the only reddening sequences available to abundant acyclic quivers is those that reduce to source mutation sequences.
    Similar exploration of possible reddening sequences for more arbitrary quivers will be presented in an upcoming work with Scott Neville \cite{ervin_mutation_2024}.
\end{Rmk}

\bibliography{references}
\bibliographystyle{plain}

\end{document}